\newtheorem{theorem}{Theorem}[section]
\newtheorem{lemma}[theorem]{Lemma}
\newtheorem{proposition}[theorem]{Proposition}
\newtheorem{corollary}[theorem]{Corollary}
\newtheorem{remark}[theorem]{Remark}
\newtheorem{definition}[theorem]{Definition}
\newcommand\re{\operatorname{Re}}
\newcommand\im{\operatorname{Im}}
\title[Isometries between  groups of invertible elements in $B(G)$]{Isometries between groups of invertible elements in Fourier-Stieltjes algebras}
\author{
Osamu~Hatori
}
\address{
Institute of Science and Technology,
Niigata University, Niigata 950-2181, Japan.
}
\email{hatori@math.sc.niigata-u.ac.jp
}
\author{
Shiho Oi
}
\address{
Department of Mathematics, Faculty of Science, 
Niigata University, Niigata 950-2181, Japan.
}
\email{shiho-oi@math.sc.niigata-u.ac.jp
}
\keywords{Fourier-Stieltjes algebras, isometries, invertible elements}
\subjclass[2020]{46B04, 46J10, 43A30, 43A22}
\begin{document}

\maketitle\textbf{}

\begin{abstract}
We prove that 
if open subgroups of the groups of invertible elements in two Fourier-Stieltjes algebras are isometric as metric spaces, then the underlying locally compact groups are topologically isomorphic.
We describe the structure of isometric real algebra isomorphisms between Fourier-Stieltjes algebras
and apply it to prove the above result. 
\end{abstract}

\maketitle

\section{Introduction}


There has been a long-standing effort to determine the conditions of a linear subspace $A$ of $C(X)$, a space of complex-valued continuous functions on a compact Hausdorff space $X$, to ensure that $X$ can be uniquely determined. The Banach-Stone theorem is the most prominent result in this field. According to the theorem, in addition to $A=C(X)$, the crucial requirement is that the structure is a Banach space.

This paper focuses on how the Fourier-Stieltjes algebra $B(G)$ of a locally compact group $G$ encodes its structure. The algebra  $B(G)$ is defined as the complex linear span of the set of all continuous positive-definite functions on $G$.
It 
is a semisimple commutative Banach algebra of all matrix coefficients of continuous unitary representations of $G$.

By \cite[Theorem 20.20]{HS} (cf. \cite{spronk}) and \cite[Section 11.1 Exercise 9]{folland}, we identify $L^1(G)^*$ with $L^\infty(G)$ for a locally compact group $G$.
Therefore
the Fourier-Stieltjes algebra $B(G)$ is identified with the dual of the group $C^{*}$-algebra $C^{*}(G)$.  The dualities determine the norm on $B(G)$.  The enveloping von Neumann algebra of $C^{*}(G)$, denoted as $W^{*}(G)$, is identified with $B(G)^{*}$ when $B(G)$ is identified with the dual of $C^*(G)$. 
Throughout the paper, we assume that $G$ and $H$ are locally compact groups, with $e_1$ and $e_2$ denoting their respective units.

A celebrated theorem by Walter \cite{Wal} states that if $S\colon B(G)\to B(H)$ is an isometric complex algebra isomorphism, then $G$ and $H$ are topologically isomorphic as locally compact groups.
 However, it is impractical that the structure of $B(G)$ as a Banach space does not completely encode the group structure of $G$. For instance, consider two discrete abelian groups with the same cardinality, $\Gamma_1$ and $\Gamma_2$. Any bijection $\alpha\colon \Gamma_2\to \Gamma_1$ induces a surjective complex linear isometry from $L^1(\Gamma_1)\to L^1(\Gamma_2)$ by $f\mapsto f\circ\alpha$ for $f\in L^1(\Gamma_1)$, which in turn induces the complex linear isometry from $B(\widehat{\Gamma_1})\to B(\widehat{\Gamma_2})$ through the Fourier transform. However, it is possible for $\Gamma_1$ and $\Gamma_2$ to be nonisomorphic as groups; for example, if $\Gamma_1=\mathbb{Z}$ (the integer group) and $\Gamma_2=\mathbb{Z}\times \mathbb{Z}$, $B(\widehat{\Gamma_1})$ and $B(\widehat{\Gamma_2})$ are isometrically isomorphic as Banach spaces, but the group $\widehat{\Gamma_1}$ is not isomorphic to $\widehat{\Gamma_2}$.

On the other hand, in \cite{H3}, the first author showed that the metric structures of open subgroups of groups of invertible elements in the measure algebras on locally compact {\it abelian} groups guarantee the structure of the underlying locally {\it abelian} groups. Investigating whether the same result holds true for noncommutative groups would be interesting and natural. 
In this paper, we will provide an effective answer to this question for any locally compact group. The key is to characterize all isometric real algebra isomorphisms between two Fourier-Stieltjes algebras on locally compact groups, which is Theorem \ref{realisometryonBG}. Since any complex algebra isomorphism occurs at the same time as a real algebra isomorphism, Theorem \ref{realisometryonBG} is a generalization of the theorem proposed by Walter\cite{Wal}. We deduce that every isometric real algebra isomorphism between two Fourier-Stieltjes algebras automatically induces a topological group isomorphism between underlying groups. This characterization makes it possible to answer the above 
 question: Corollary \ref{topologicalygroupiso}.


\section{Isometric real algebra isomorphisms}
In this section, we consider surjective real linear isometries that also preserve the multiplication between $B(G)$ and $B(H)$.
To distinguish them from usual isometric isomorphisms (which are assumed to be complex linear), we refer to them as isometric real algebra isomorphisms. 
We present Theorem \ref{realisometryonBG}
 which characterizes an isometric real algebra isomorphism. This is the result of a comparison with a celebrated theorem of Walter  \cite{Wal}.
Authors such as Cohen \cite{Co}, Daws \cite{Daws}, Ilie \cite{Il}, Ilie and Spronk \cite{IlieSpronk}, and Le Pham \cite{LePham} have extensively studied complex linear homomorphisms on Fourier-Stieltjes algebras and Fourier algebras.
Our study presents a new approach for obtaining a representation of real linear operators on Fourier-Stieltjes algebras. 
In contrast to Walter's work, our results demonstrate that the assumption of real linearity is sufficient for the isomorphism of underlying groups for similar mappings.
Notably, every isometric real algebra isomorphism is either complex linear or conjugate-linear, which is an interesting finding.

Let $T: B(G) \to B(H)$ be an isometric real algebra isomorphism. We define $T^{*}: B(H)^{*} \to B(G)^{*}$ by 
\begin{equation}\label{dualmap}
T^{*}(\varphi)(f)=\re \varphi(Tf)-i \re\varphi(T(if)), \quad f \in B(G)
\end{equation}
for $\varphi\in B(H)^{*}$.
Then, $T^{*}$ is a surjective real linear isometry. 
For the convenience of the readers, we have included this proof. 
By a simple calculation, we see that $T^*(\varphi)$ is a complex linear functional on $B(G)$ for $\varphi\in B(H)^*$. 
We show that $\|T^*(\varphi)\|\le \|\varphi\|$ for every $\varphi\in B(H)^*$. Let $f\in B(G)$. There exists $0\le \theta<2\pi$ such that 
\[
T^*(\varphi)(f)=|T^*(\varphi)(f)|e^{i\theta}.
\]
Then, 
\begin{equation*}
\begin{split}
    |T^*(\varphi)(f)|&=e^{-i\theta}T^*(\varphi)(f)=T^*(\varphi)(e^{-i\theta}f) \\
    &=
    \re \varphi(T(e^{-i\theta}f))\le |\varphi(T(e^{-i\theta}f))| \\
    &\le
    \|\varphi\|\|T(e^{-i\theta}f)\|=\|\varphi\|\|e^{-i\theta}f\|=\|\varphi\|\|f\|.
\end{split}
\end{equation*}
Hence, we have 
$\|T^*(\varphi)\|\le \|\varphi\|$
for $\varphi\in B^*(H)$. We show that the inverse inequality $\|\varphi\|\le \|T^*(\varphi)\|$. Then, $\|T^*(\varphi)\|=\|\varphi\|$, so that $T^*$ is an isometry. Let $h\in B(H)$. Then,  $0\le\theta< 2\pi$ exists such that $\varphi(h)=|\varphi(h)|e^{i\theta}$. Then 
\begin{equation*}
    \begin{split}
        |\varphi(h)|&=e^{-i\theta}\varphi(h)=\varphi(e^{-i\theta}h) \\
        &=\varphi(T(T^{-1}(e^{-i\theta}h)))
        =\re\varphi(T(T^{-1}(e^{-i\theta}h))) \\
        &\le |T^*(\varphi)(T^{-1}(e^{-i\theta}h))|\le \|T^*(\varphi)\|\|T^{-1}(e^{-i\theta}h)\| \\
        &=\|T^*(\varphi)\|\|h\|.        
    \end{split}
\end{equation*}
As $h\in B(H)$ is arbitrary, we have that $\|\varphi\|\le \|T^*(\varphi)\|$. To prove that $T^*$ is surjective, 
we consider $(T^{-1})^*\colon B(G)^*\to B(H)^*$, similar to $T^*$; $(T^{-1})^*(\psi)(h)=\re \psi (T^{-1}(h))-i\re \psi (T^{-1}(ih))$ for $h\in B(H)$ and $\psi\in B(G)^*$. Let $\psi\in B(G)^*$ be arbitrary. 
Put $\varphi'=(T^{-1})^*(\psi)$. Then $\varphi'\in B(H)^*$.
For every $f\in B(G)$ we have that 
\begin{equation*}
\begin{split}
  T^*(\varphi')(f)&=\re \varphi'(T(f))-i\re \varphi'(T(if)) \\
  &=
  \re(\re\psi(T^{-1}(T(f)))-i\re \psi (T^{-1}(iT(f)))) \\
  &
  -i\re(\re \psi(T^{-1}(T(if)))-i\re\psi(T^{-1}(iT(if)))) \\
  &=
  \re\psi(f)-i\re\psi(if)=\psi(f).
\end{split}
\end{equation*}
As $\psi\in B(G)^*$ is arbitrary we infer that $T^*$ is surjective.

Let $\sigma(B(G))$ and $\sigma(B(H))$ be the spectra (the same as the maximal ideal spaces) of 
$B(G)$ and $B(H)$,  respectively. 
\begin{lemma}\label{multi}
We have 
\[
T^{*}(\sigma(B(H)))=\sigma(B(G)).
\]  
\end{lemma}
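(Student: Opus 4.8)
The plan is to use the standard description of the spectrum of a unital commutative Banach algebra as its set of nonzero multiplicative complex-linear functionals, combined with an elementary parity argument that pins down how such a functional interacts with multiplication by $i$. First I would note that $T$ carries the identity $1$ of $B(G)$ to the identity of $B(H)$: from $T(1)T(f)=T(1\cdot f)=T(f)$ for every $f$ and the surjectivity of $T$, the element $T(1)$ is an identity for $B(H)$, hence equals $1$. Consequently, for any $\varphi\in\sigma(B(H))$ the composition $\varphi\circ T\colon B(G)\to\C$ is real linear, multiplicative as a composition of multiplicative maps, and satisfies $(\varphi\circ T)(1)=1$.

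The key step is then the following dichotomy. Since $\varphi\circ T$ is multiplicative and real linear with $(\varphi\circ T)(1)=1$, one has $\bigl((\varphi\circ T)(i1)\bigr)^{2}=(\varphi\circ T)\bigl((i1)^{2}\bigr)=(\varphi\circ T)(-1)=-1$, so $(\varphi\circ T)(i1)=\pm i$. If $(\varphi\circ T)(i1)=i$, then $(\varphi\circ T)(if)=(\varphi\circ T)(i1)(\varphi\circ T)(f)=i(\varphi\circ T)(f)$ for all $f$, so $\varphi\circ T$ is complex linear and hence belongs to $\sigma(B(G))$; substituting $(\varphi\circ T)(if)=i(\varphi\circ T)(f)$ into \eqref{dualmap} gives $T^{*}(\varphi)(f)=\re(\varphi\circ T)(f)-i\,\re\!\bigl(i(\varphi\circ T)(f)\bigr)=(\varphi\circ T)(f)$, so $T^{*}(\varphi)=\varphi\circ T\in\sigma(B(G))$. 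If instead $(\varphi\circ T)(i1)=-i$, then $(\varphi\circ T)(if)=-i(\varphi\circ T)(f)$, so $\overline{\varphi\circ T}$ is complex linear; it is again multiplicative (complex conjugation is multiplicative) and nonzero, hence lies in $\sigma(B(G))$, and the same substitution in \eqref{dualmap} now yields $T^{*}(\varphi)(f)=\overline{(\varphi\circ T)(f)}$, i.e. $T^{*}(\varphi)=\overline{\varphi\circ T}\in\sigma(B(G))$. Either way $T^{*}(\varphi)\in\sigma(B(G))$, so $T^{*}(\sigma(B(H)))\subseteq\sigma(B(G))$.

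To finish, I would run the identical argument for the isometric real algebra isomorphism $T^{-1}\colon B(H)\to B(G)$ together with its associated map $(T^{-1})^{*}$, obtaining $(T^{-1})^{*}(\sigma(B(G)))\subseteq\sigma(B(H))$; since the computation just before the lemma shows $(T^{-1})^{*}=(T^{*})^{-1}$, applying $T^{*}$ to this inclusion gives $\sigma(B(G))\subseteq T^{*}(\sigma(B(H)))$, and the two inclusions combine to the asserted equality. I do not anticipate a serious obstacle here; the only points requiring care are the parity computation $(\varphi\circ T)(i1)=\pm i$ and the verification that in the second case it is $\overline{\varphi\circ T}$, rather than $\varphi\circ T$ itself, that is the complex algebra homomorphism one extracts, together with the boundedness of these functionals — which is automatic in any case, since $T^{*}$ is an isometry.
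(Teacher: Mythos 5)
Your proposal is correct, and it reaches the same dichotomy that drives the paper's proof --- namely $\varphi(T(if))=\pm i\,\varphi(T(f))$ --- but by a cleaner mechanism. The paper derives the sign from $\varphi(T(if))^{2}=-\varphi(T(f))^{2}$ separately for each $f$, then needs a pairwise consistency argument (its \eqref{jyamira}) plus a case split over whether $\varphi(T(f))=0$ to show the sign is the same for all $f$, and finally verifies multiplicativity of $T^{*}(\varphi)$ by expanding real and imaginary parts directly. You instead observe that $T(1)=1$, so $\varphi\circ T$ is a unital real-linear multiplicative functional, and the single value $(\varphi\circ T)(i1)=\pm i$ fixes the sign globally via $(\varphi\circ T)(if)=(\varphi\circ T)(i1)(\varphi\circ T)(f)$; this immediately identifies $T^{*}(\varphi)$ as $\varphi\circ T$ or $\overline{\varphi\circ T}$, each of which is visibly a nonzero complex-linear multiplicative functional. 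This buys a shorter proof with no case analysis, and it makes explicit the structural fact (only implicit in the paper at this stage) that $T^{*}(\varphi)$ is either $\varphi\circ T$ or its conjugate. The surjectivity half of your argument, via $(T^{-1})^{*}$ and the identity $T^{*}\circ(T^{-1})^{*}=\mathrm{id}$ established just before the lemma, is the same as the paper's. One small point worth stating explicitly if you write this up: membership in $\sigma(B(G))$ requires continuity of the functional, which is automatic for multiplicative linear functionals on a commutative Banach algebra (or can be read off from $T^{*}$ being an isometry, as you note).
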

\begin{proof}
Let $\varphi \in \sigma(B(H))$. For any $f \in B(G)$, we have $\varphi(T(if))^2=-(\varphi(Tf))^2$. This implies that 
\begin{equation}\label{jyami0}
\text{$\varphi(T(if))=i\varphi(Tf)$ or $\varphi(T(if))=-i\varphi(Tf)$.}
\end{equation}
Let $f,g\in B(G)$ and $\varphi\in \sigma(B(H))$.
As $T$ and $\varphi$ are multiplicative and real linear we have
\begin{multline}\label{jyami1}
\varphi(T(if))\varphi(T(ig))=\varphi(T(if)T(ig))
\\
=\varphi(T(-fg))=-\varphi(T(fg))=(i\varphi(T(f)))(i\varphi(T(g))).
\end{multline}
If $\varphi(T(f))\ne 0$ and $\varphi(T(g))\ne0$, then $\eqref{jyami1}$ ensures that 
\begin{equation}\label{jyamira}
\text{$i\varphi(T(f))=\varphi(T(if))$ if and only if $i\varphi(T(g))=\varphi(T(ig))$}.
\end{equation}
Hence we have
\begin{equation}\label{jyami2}
\re i\varphi(T(f))\re i\varphi(T(g))=\re\varphi(T(if))\re\varphi(T(ig)).
\end{equation}
If $\varphi(T(f))=0$ or $\varphi(T(g))=0$, then we infer that $\varphi(T(if))=0$ or $\varphi(T(ig))=0$ respectively by \eqref{jyami0}. 
Hence, \eqref{jyami2} also holds for any $f,g \in B(G)$. Applying \eqref{jyami2}
we have 
\begin{equation*}
\begin{split}
&T^*(\varphi)(f)T^*(\varphi)(g) \\
&=
(\re\varphi(T(f))-i\re\varphi(T(if)))(\re \varphi(T(g))-i\re\varphi(T(ig))) \\
&=
\re\varphi(T(f))\re\varphi (T(g))-\re i\varphi(T(f))\re i\varphi(T(g)) \\
&
-i\left(\re\varphi(T(if))\re\varphi(T(g))-\re i\varphi(T(if))\re i\varphi(T(g))\right) \\
&=
\re\varphi(T(f))\re\varphi(T(g))-\im\varphi(T(f))\im \varphi(T(g))\\
&
-i\left(
\re\varphi(T(if))\re\varphi(T(g))-\im \varphi(T(if))\im \varphi(T(g))\right) \\
&=
T^*(\varphi(fg)).
\end{split}
\end{equation*}
Thus $T^{*}(\varphi)$ is a multiplicative linear functional. As $T^{*}$ is a surjective isometry and $\varphi \neq 0$, we obtain $T^{*}(\varphi) \neq 0$. Hence $T^*(\varphi)\in \sigma(B(G))$. To prove that $T^*$ maps $\sigma(B(H))$ onto $\sigma(B(G))$, 
we consider $(T^{-1})^*\colon B(G)^*\to B(H)^*$ in a similar way as $T^*$; $(T^{-1})^*(\psi)(h)=\re \psi (T^{-1}(h))-i\re \psi (T^{-1}(ih))$ for $h\in B(H)$ and $\psi\in B(G)^*$. Let $\psi\in \sigma(B(G))$ be arbitrary. As in the first part, we have $\varphi'=(T^{-1})^*(\psi)\in \sigma(B(H))$. For every $f\in B(G)$, we have 
\begin{equation*}
\begin{split}
  T^*(\varphi')(f)&=\re \varphi'(T(f))-i\re \varphi'(T(if)) \\
  &=
  \re(\re\psi(T^{-1}(T(f)))-i\re \psi (T^{-1}(iT(f)))) \\
  &
  -i\re(\re \psi(T^{-1}(T(if)))-i\re\psi(T^{-1}(iT(if)))) \\
  &=
  \re\psi(f)-i\re\psi(if)=\psi(f).
\end{split}
\end{equation*}
As $\psi\in \sigma(B(G))$ is arbitrary, we infer that $T^*$ maps $\sigma(B(H))$ onto $\sigma(B(G))$. 
\end{proof}

\begin{definition}
Let $f\in B(G)$ such that $T(f)$ is invertible in $B(H)$.  We define 
\[
\sigma(B(H))^{+}=\{ \varphi \in \sigma(B(H)) \mid \varphi(T(if))=i\varphi(T(f)) \}
\]
and 
\[
\sigma(B(H))^{-}=\{ \varphi \in \sigma(B(H)) \mid \varphi(T(if))=-i\varphi(T(f)) \}.
\]  
\end{definition}
Note that $\sigma(B(H))^{+}$ and $\sigma(B(H))^{-}$ are independent of the choice of $f\in B(G)$ by \eqref{jyamira}. 
This infers that for any $g \in B(G)$, if $\varphi \in \sigma(B(H))^{+}$ (resp. $\varphi \in \sigma(B(H))^{-}$), then $\varphi(T(ig))=i\varphi(T(g))$ (resp. $\varphi(T(ig))=-i\varphi(T(g)$). By \eqref{jyami0},  we have $\sigma(B(H))=\sigma(B(H))^{+}\cup \sigma(B(H))^{-}$, which is a disjoint union. 

\begin{lemma}\label{2}
We have 
\[
T^{*}(i \varphi)=
\begin{cases}
     i T^{*}(\varphi) \quad & \text{if $\varphi \in \sigma(B(H))^{+}$,} \\
    -iT^{*}(\varphi) \quad & \text{if $\varphi \in \sigma(B(H))^{-}$}.
  \end{cases}\\
\]
\end{lemma}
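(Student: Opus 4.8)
The plan is to prove this by a direct computation from the defining formula \eqref{dualmap}, applied to the merely linear (not multiplicative) functional $i\varphi\in B(H)^{*}$, and then to compare it term by term with $T^{*}(\varphi)(f)$. I would fix $\varphi\in\sigma(B(H))$ and $f\in B(G)$ and abbreviate $z=\varphi(Tf)\in\C$. Using $\re(i\zeta)=-\im\zeta$ for $\zeta\in\C$, formula \eqref{dualmap} becomes
\[
T^{*}(i\varphi)(f)=\re\bigl(i\varphi(Tf)\bigr)-i\,\re\bigl(i\varphi(T(if))\bigr)=-\im\varphi(Tf)+i\,\im\varphi(T(if)).
\]
The only extra input needed is the observation recorded after the Definition: $\varphi(T(ig))=i\varphi(Tg)$ for all $g\in B(G)$ when $\varphi\in\sigma(B(H))^{+}$, and $\varphi(T(ig))=-i\varphi(Tg)$ for all $g\in B(G)$ when $\varphi\in\sigma(B(H))^{-}$.

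I would then split into the two cases. If $\varphi\in\sigma(B(H))^{+}$, substituting $\varphi(T(if))=iz$ shows the displayed expression equals $-\im z+i\,\re z=iz$, while the same substitution in \eqref{dualmap} for $T^{*}(\varphi)$ itself gives $T^{*}(\varphi)(f)=\re z+i\,\im z=z$; hence $T^{*}(i\varphi)(f)=iT^{*}(\varphi)(f)$. If $\varphi\in\sigma(B(H))^{-}$, substituting $\varphi(T(if))=-iz$ makes the displayed expression equal $-\im z-i\,\re z=-i\bar z$, whereas \eqref{dualmap} now yields $T^{*}(\varphi)(f)=\re z-i\,\im z=\bar z$; hence $T^{*}(i\varphi)(f)=-i\bar z=-iT^{*}(\varphi)(f)$. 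Since $f\in B(G)$ was arbitrary, this gives the claimed identity on each of $\sigma(B(H))^{+}$ and $\sigma(B(H))^{-}$, and by the disjoint decomposition $\sigma(B(H))=\sigma(B(H))^{+}\cup\sigma(B(H))^{-}$ the proof is complete.

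This is essentially a bookkeeping computation rather than an argument with a genuine obstacle; the conceptual content worth highlighting is that along the way one sees $T^{*}(\varphi)=\varphi\circ T$ for $\varphi\in\sigma(B(H))^{+}$ and $T^{*}(\varphi)=\overline{\varphi\circ T}$ for $\varphi\in\sigma(B(H))^{-}$, which is exactly the source of the linearity on one piece and conjugate-linearity on the other. The only points requiring a little care are that $T^{*}$ must be evaluated on $i\varphi$ through the raw formula \eqref{dualmap} (as $i\varphi$ is not multiplicative), and that the $\sigma(B(H))^{\pm}$ relation may be applied uniformly in the argument $f$, which is legitimate precisely because of the independence-of-$f$ remark following the Definition.
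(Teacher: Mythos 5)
Your computation is correct and follows essentially the same route as the paper: evaluate $T^{*}(i\varphi)$ via the defining formula \eqref{dualmap} and substitute the relation $\varphi(T(if))=\pm i\varphi(Tf)$, which the remark after the Definition justifies applying uniformly in $f$. The extra observation that $T^{*}(\varphi)=\varphi\circ T$ on $\sigma(B(H))^{+}$ and $T^{*}(\varphi)=\overline{\varphi\circ T}$ on $\sigma(B(H))^{-}$ is a nice clarification but not a departure from the paper's argument.
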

\begin{proof}
For any $f \in B(G)$, we have
\begin{equation*}
\begin{split}
T^{*}(i \varphi)(f)&=\begin{cases}
     \re \varphi(T(if)) + i \re\varphi(T(f))\quad & \text{if $\varphi \in \sigma(B(H))^{+}$,} \\
    -\re \varphi(T(if)) - i \re\varphi(T(f)) \quad & \text{if $\varphi \in \sigma(B(H))^{-}$,}
  \end{cases}\\
  &=\begin{cases}
      i T^{*}(\varphi)(f) \quad & \text{if $\varphi \in \sigma(B(H))^{+}$,} \\
    -iT^{*}(\varphi)(f) \quad & \text{if $\varphi \in \sigma(B(H))^{-}$}.
  \end{cases}
  \end{split}
\end{equation*}
\end{proof}

Note that there is a one-to-one correspondence between the unitary representations of $G$ and the nondegenerate $*$-representations of $C^{*}(G)$ (cf. Theorems 3.9 and 3.11 in \cite{folland2} and Proposition 2.7.4 \cite{Dixmier}). 
Thus we refer to the unitary representation $w$ of $G$, which is associated with the universal representation of $C^{*}(G)$, as the universal representation of $G$. 
Then it is proven in \cite[(2.10)]{Eym} that 
$w(x)$ is the point evaluation 
at $x$ for the universal representation $w$ of $G$ 
as follows: 
\[
w(x)(f)=f(x)
\]
for any $f \in B(G)$ and $x \in G$ (see also \cite[Remark 2.1.6.]{KL}).

 The spectrum $\sigma(B(G))$ of $B(G)$ is discussed in \cite[Lemma 3.2.3]{KL}. Some of these basic properties have been previously proven by Walter \cite[Theorem 1]{Wal}. Recall that the dual space $B(G)^*$ of $B(G)$ is identified with $W^*(G)$. Thus,  we can assume  that $\sigma(B(G))\subset W^*(G)$. In the following, for $\varphi_1, \varphi_2 \in \sigma(B(G))$, the multiplication $\varphi_1\varphi_2$ is defined as the element in the von Neumann algebra $W^*(G)$. Similarly, the involution $\varphi^*$ of $\varphi\in \sigma(B(G))$ is also defined in $W^*(G)$.

\begin{lemma}[Lemma 3.2.3 in \cite{KL}]\label{basic}
Let $G$ be a locally compact group.
\begin{enumerate}
\item If $\varphi_1, \varphi_2 \in \sigma(B(G))$, then $\varphi_1\varphi_2 \in \sigma(B(G))$ and ${\varphi_1}^{*} \in \sigma(B(G))$.
\item We have $w(G)=\{\varphi\in \sigma(B(G))\mid \text{$\varphi$ is a unitary in $W^*(G)$}\}$, where $w(G)=\{ w(x) \mid x \in G \}$.
\end{enumerate}
\end{lemma}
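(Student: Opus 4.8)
The plan is to identify $\sigma(B(G))\cup\{0\}$ with the group-like elements of the enveloping von Neumann algebra $W^{*}(G)$, deduce (1) from the fact that the comultiplication is a $*$-homomorphism, and deduce (2) by transferring the question from $B(G)$ to its closed ideal $A(G)$, where the spectrum is already known. Recall that the pointwise product of $B(G)$ is, by duality, the predual multiplication attached to a normal unital $*$-homomorphism $\Gamma\colon W^{*}(G)\to W^{*}(G)\bar\otimes W^{*}(G)$ with $\Gamma(w(x))=w(x)\otimes w(x)$ (concretely, $\widetilde{\pi\otimes\rho}=(\tilde\pi\otimes\tilde\rho)\circ\Gamma$, where $\tilde\pi$ denotes the normal extension to $W^{*}(G)$ of a unitary representation $\pi$). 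Thus a nonzero $\varphi\in B(G)^{*}=W^{*}(G)$ belongs to $\sigma(B(G))$ precisely when $\Gamma(\varphi)=\varphi\otimes\varphi$, i.e. $\varphi$ is group-like.

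For (1): since $\Gamma$ is multiplicative and $*$-preserving, $\Gamma(\varphi_{1})=\varphi_{1}\otimes\varphi_{1}$ and $\Gamma(\varphi_{2})=\varphi_{2}\otimes\varphi_{2}$ give $\Gamma(\varphi_{1}\varphi_{2})=\Gamma(\varphi_{1})\Gamma(\varphi_{2})=\varphi_{1}\varphi_{2}\otimes\varphi_{1}\varphi_{2}$ and $\Gamma(\varphi_{1}^{*})=\varphi_{1}^{*}\otimes\varphi_{1}^{*}$, so $\varphi_{1}\varphi_{2}$ and $\varphi_{1}^{*}$ are group-like. The remaining point is that they do not vanish; I would obtain this from the counit $\epsilon\colon W^{*}(G)\to\C$, the normal $*$-homomorphism extending the trivial representation, which as a functional on $B(G)$ is pairing with the constant function $1$, the unit of $B(G)$. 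Since $\epsilon(\varphi_{j})=\varphi_{j}(1)=1$, we get $\epsilon(\varphi_{1}\varphi_{2})=1$ and $\epsilon(\varphi_{1}^{*})=\overline{\epsilon(\varphi_{1})}=1$, so neither is $0$; hence both lie in $\sigma(B(G))$. (Alternatively one may avoid naming $\Gamma$: for $\varphi\in\sigma(B(G))$ multiplicativity forces $\widetilde{\pi\otimes\rho}(\varphi)=\tilde\pi(\varphi)\otimes\tilde\rho(\varphi)$ for all $\pi,\rho$, after which the multiplicativity of $\varphi_{1}\varphi_{2}$ and $\varphi_{1}^{*}$ as functionals on $B(G)$ follows from $\widetilde{\pi\otimes\rho}(\varphi_{1}\varphi_{2})=\widetilde{\pi\otimes\rho}(\varphi_{1})\widetilde{\pi\otimes\rho}(\varphi_{2})$, $\tilde\pi(\varphi_{1}^{*})=\tilde\pi(\varphi_{1})^{*}$, and the computation on $1$.)

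For (2): that $w(G)$ consists of unitary elements of $\sigma(B(G))$ is routine, since $w(x)(uv)=(uv)(x)=u(x)v(x)=w(x)(u)\,w(x)(v)$ and $w(x)(1)=1$ show $w(x)\in\sigma(B(G))$, and $w(x)$ is unitary in $W^{*}(G)$ because $w$ is a unitary representation. For the converse, let $\varphi\in\sigma(B(G))$ be unitary in $W^{*}(G)$. The crucial observation is that the annihilator $A(G)^{\perp}$ of the Fourier algebra inside $W^{*}(G)=B(G)^{*}$ coincides with the kernel of the normal $*$-homomorphism $\tilde\lambda\colon W^{*}(G)\to VN(G)$ induced by the left regular representation; in particular it is a proper, weak-$*$-closed, two-sided ideal, hence contains no invertible element. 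Therefore $\varphi\notin A(G)^{\perp}$, that is $\varphi|_{A(G)}\ne0$, so $\varphi|_{A(G)}$ is a nonzero character of $A(G)$; by Eymard's theorem that $\sigma(A(G))=G$ there is $x\in G$ with $\varphi(u)=u(x)$ for all $u\in A(G)$. Finally, since $A(G)$ is a closed ideal of $B(G)$, picking $u_{0}\in A(G)$ with $u_{0}(x)\ne0$ yields, for every $v\in B(G)$, $\varphi(u_{0})\varphi(v)=\varphi(u_{0}v)=(u_{0}v)(x)=u_{0}(x)v(x)$, whence $\varphi(v)=v(x)=w(x)(v)$, so $\varphi=w(x)\in w(G)$. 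I expect the main difficulty to be not any single computation but assembling these structural inputs — for (1) the coproduct/counit description of the multiplication on $B(G)$, and for (2) the pair of facts $\sigma(A(G))=G$ and $A(G)^{\perp}=\ker\tilde\lambda$; the latter is exactly what turns the hypothesis ``$\varphi$ unitary'' into ``$\varphi$ is detected by $A(G)$'', and then the ideal property of $A(G)$ in $B(G)$ upgrades a point evaluation on $A(G)$ to one on all of $B(G)$. (The argument in fact shows the stronger statement $\{\varphi\in\sigma(B(G)):\varphi|_{A(G)}\ne0\}=w(G)$, so unitarity enters only to ensure $\varphi|_{A(G)}\ne0$.)
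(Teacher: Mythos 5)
The paper does not prove this lemma at all: it is imported verbatim as \cite[Lemma 3.2.3]{KL}, so there is no internal proof to compare against. Your argument is correct and self-contained modulo standard structural inputs, all of which are in \cite{KL} or Eymard \cite{Eym}: for (1), that the pointwise product on $B(G)$ is predual to the normal unital $*$-homomorphism $\Gamma$ with $\Gamma(w(x))=w(x)\otimes w(x)$ (so that group-likeness is preserved under products and adjoints), together with the counit identity $\epsilon(\varphi)=\varphi(1)$ to rule out the zero functional; for (2), that $\sigma(A(G))\cong G$, that $A(G)^{\perp}=\ker\tilde\lambda$ is a proper weak-$*$-closed two-sided ideal (hence contains no unitary), and that the ideal property of $A(G)$ in $B(G)$ bootstraps a point evaluation on $A(G)$ to one on $B(G)$. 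Every step checks out — in particular the division by $u_{0}(x)\ne 0$ is legitimate since $A(G)$ contains functions nonvanishing at any prescribed point — and your closing remark that unitarity is used only to guarantee $\varphi|_{A(G)}\ne 0$ is a genuine (and standard) sharpening. This is essentially the textbook route; the only stylistic difference from \cite{KL} is that you phrase (1) through the explicit Hopf–von Neumann coproduct rather than through direct manipulation of matrix coefficients of tensor-product representations, which, as you note yourself, amounts to the same computation.
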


We note that $w(e_1) \in \sigma(B(G))$ (resp. $w(e_2) \in \sigma(B(H))$) is the unit of $W^*(G)$ (resp. $W^*(H)$) since $w$ is the universal representation of $G$ (resp. $H$). 
Since $T^{*}$ is a surjective real linear isometry from $W^{*}(H)$ onto $W^{*}(G)$, Proposition 2.1 in \cite{HW} (precisely it is described in the proof) ensures that $T^{*}(w(e_2))$ is a unitary element of $W^{*}(G)$. Hence, $b \in G$ such that $T^{*}(w(e_2))=w(b)$ according to Lemma \ref{basic} (2). By \cite[Proposition 2.1.]{HW}, there is a Jordan $*$-isomorphism $J\colon W^*(H)\to W^*(G)$ and a central projection $p \in W^{*}(G)$ such that 
\begin{equation}\label{jyusunomoto}
T^{*}(\varphi)=w(b)(pJ(\varphi)+(1-p)J(\varphi^{*})), \quad \varphi \in W^{*}(H).
\end{equation}

First, we present the following proposition. 
\begin{proposition}
\label{unital}
Let $G$ and $H$ be locally compact groups.  Suppose that  $T: B(G) \to B(H)$ is an isometric real algebra isomorphism with $T^{*}(w(e_2))=w(e_1)$. Then there exists a topological isomorphism or topological anti-isomorphism $\alpha \colon H \to G$ such that 
\[
T(f)(x)= f(\alpha(x)) \quad  f \in B(G),  \ x \in H,
\]
or 
\[
T(f)(x)= \overline{f(\alpha(x))} \quad  f \in B(G),  \ x \in H.
\]
\end{proposition}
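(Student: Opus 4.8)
The plan is to exploit the representation \eqref{jyusunomoto} in the unital case $T^{*}(w(e_2)) = w(e_1)$, which forces $b = e_1$ and reduces \eqref{jyusunomoto} to
\[
T^{*}(\varphi) = pJ(\varphi) + (1-p)J(\varphi^{*}), \qquad \varphi \in W^{*}(H),
\]
for a Jordan $*$-isomorphism $J\colon W^{*}(H) \to W^{*}(G)$ and a central projection $p \in W^{*}(G)$. First I would observe that on each of the two central summands $J$ acts as an honest $*$-isomorphism or $*$-anti-isomorphism: writing $W^{*}(G) = pW^{*}(G) \oplus (1-p)W^{*}(G)$, the map $\varphi \mapsto pJ(\varphi)$ is a $*$-homomorphism and $\varphi \mapsto (1-p)J(\varphi^{*})$ is a $*$-anti-homomorphism (this is the standard decomposition of a Jordan $*$-homomorphism into a homomorphism on one summand and an anti-homomorphism on the other). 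Combining, $T^{*}$ is multiplicative on $\sigma(B(H))^{+}$ and anti-multiplicative on $\sigma(B(H))^{-}$, and by Lemma \ref{2} it is complex linear on the first and conjugate-linear on the second, which matches the dichotomy in Lemma \ref{multi}.

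Next I would restrict $T^{*}$ to $w(H)$. By Lemma \ref{basic}(2), $w(H)$ is exactly the set of unitaries in $\sigma(B(H))$, and since $T^{*}$ is a surjective real-linear isometry of von Neumann algebras carrying $w(e_2)$ to $w(e_1) = 1$, Proposition 2.1 in \cite{HW} gives that $T^{*}$ carries unitaries to unitaries; together with Lemma \ref{multi} it restricts to a bijection $w(H) \to w(G)$. Define $\alpha\colon H \to G$ by $w(\alpha(x)) = T^{*}(w(x))$; this is well-defined and bijective because $w$ is injective on $H$ (the universal representation is faithful) and likewise on $G$. The key point is then that $w$ is a homeomorphism of $H$ onto $w(H)$ in the weak* topology inherited from $W^{*}(H) = B(H)^{*}$ — this follows from continuity of positive-definite functions and the fact that $w(x)(f) = f(x)$ — and $T^{*}$ is a weak*-homeomorphism of $W^{*}(H)$ onto $W^{*}(G)$ since it is the adjoint of the bounded real-linear bijection $T$. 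Hence $\alpha$ is a homeomorphism. To see it is a group homomorphism or anti-homomorphism, use the two central summands: if $p$ dominates the relevant part, then for $x,y \in H$, $T^{*}(w(xy)) = T^{*}(w(x)w(y)) = T^{*}(w(x))T^{*}(w(y))$ on the $p$-part and $= T^{*}(w(y))T^{*}(w(x))$ on the $(1-p)$-part; one then argues that, because $w(xy)$ is a single unitary and $T^{*}$ preserves unitaries, the decomposition cannot genuinely mix the two behaviors unless $p$ or $1-p$ is trivial — I would show $\alpha(xy) = \alpha(x)\alpha(y)$ for all $x,y$ when $p = 1$ and $\alpha(xy) = \alpha(y)\alpha(x)$ when $p = 0$, by evaluating against $f \in B(G)$ and using that $w(g)(f) = f(g)$. (The genuinely mixed case can be ruled out because $\{x : \alpha(xy)=\alpha(x)\alpha(y)\}$ and its anti-version are closed subgroups covering $H$.)

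Finally I would convert the statement about $T^{*}$ on $w(H)$ back into the claimed formula for $T$. For $f \in B(G)$ and $x \in H$, using $w(\alpha(x))(f) = f(\alpha(x))$ and the definition of $T^{*}$ in \eqref{dualmap}:
\[
T(f)(x) = w(x)(Tf) = \re w(x)(Tf) + i\,\im w(x)(Tf).
\]
When $w(x) \in \sigma(B(H))^{+}$, \eqref{dualmap} gives $T^{*}(w(x))(f) = w(x)(Tf)$ exactly, so $T(f)(x) = w(\alpha(x))(f) = f(\alpha(x))$; when $w(x) \in \sigma(B(H))^{-}$, \eqref{dualmap} gives $T^{*}(w(x))(f) = \overline{w(x)(Tf)}$, hence $T(f)(x) = \overline{f(\alpha(x))}$. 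But $\sigma(B(H))^{+}$ and $\sigma(B(H))^{-}$ are determined by which central summand $p$ picks out, which is the same global choice for all $x$; so either $T(f)(x) = f(\alpha(x))$ for all $f,x$, or $T(f)(x) = \overline{f(\alpha(x))}$ for all $f,x$. In the first case $\alpha$ is a topological isomorphism (it preserves the group law, as shown above, with $p=1$), and in the second case $\alpha$ is a topological anti-isomorphism ($p = 0$); composing an anti-isomorphism with inversion $g \mapsto g^{-1}$ would give a genuine isomorphism if one prefers, but the statement allows the anti-isomorphism directly.

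The main obstacle I anticipate is the bookkeeping around the central projection $p$: one must be careful that $\sigma(B(H))^{\pm}$, which were defined via the phase relation $\varphi(T(if)) = \pm i \varphi(T(f))$, coincide with the clopen pieces of $\sigma(B(H))$ cut out by $p$ and $1-p$ under the Jordan decomposition, and that the universal representation $w(H)$ lies entirely in one piece (so that $\alpha$ is uniformly an isomorphism or uniformly an anti-isomorphism, rather than a hybrid). Verifying this compatibility — essentially that the "plus/minus" decomposition from Lemma \ref{2} agrees with the homomorphism/anti-homomorphism decomposition of the Jordan isomorphism — is the technical heart of the argument; everything else is a matter of unwinding definitions and invoking the cited structure theorems.
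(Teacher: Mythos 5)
Your outline follows the paper's strategy, and several individual steps are right: the identities $T^{*}(\varphi)(f)=\varphi(Tf)$ for $\varphi\in\sigma(B(H))^{+}$ and $T^{*}(\varphi)(f)=\overline{\varphi(Tf)}$ for $\varphi\in\sigma(B(H))^{-}$ are correct and give a clean route to the pointwise formula, and the ``two subgroups cannot cover a group'' trick is exactly the paper's Lemma \ref{alpha}. But there is a genuine error at the foundation: you identify the central projection $p$ of \eqref{jyusunomoto} with the central projection supplied by Kadison's theorem \cite{Kadison} for the Jordan $*$-isomorphism $J$. These are two independent decompositions. The projection $p$ comes from \cite[Proposition 2.1]{HW} and separates the complex-linear part of $T^{*}$ from its conjugate-linear part; the Kadison projection (called $Z_g$ in Lemma \ref{alpha}) separates the summand on which $J$ is multiplicative from the one on which it is anti-multiplicative. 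There is no reason for these to coincide, so your claim that $\varphi\mapsto pJ(\varphi)$ is a $*$-homomorphism and $\varphi\mapsto(1-p)J(\varphi^{*})$ a $*$-anti-homomorphism is unjustified, and false in general. The conflation propagates to your conclusion that $p=1$ forces $\alpha$ to be an isomorphism and $p=0$ forces an anti-isomorphism. Concretely, for nonabelian $G$ take $T\colon B(G)\to B(G)$, $T(f)(x)=f(x^{-1})$; by \cite[Corollary of Theorem 3]{Wal} this is a complex-linear isometric algebra automorphism, so $p=1$ and $w(G)\subset\sigma(B(G))^{+}$, yet $J(w(x))=w(x^{-1})$ is anti-multiplicative and $\alpha(x)=x^{-1}$ is an anti-isomorphism. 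Your argument as written cannot produce this case, which is one of the four combinations the proposition's statement deliberately allows.

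The repair is what the paper does: run the two dichotomies separately. Lemma \ref{alpha} uses the Kadison decomposition (with its own projections $Z_h,Z_g$, kept distinct from $p$) and the subgroup-covering argument to show $J$ is globally multiplicative or globally anti-multiplicative on $w(H)$; independently, Lemma \ref{+or-} shows $w(H)\subset\sigma(B(H))^{+}$ or $w(H)\subset\sigma(B(H))^{-}$, which governs whether conjugation appears. A smaller point: your continuity argument (``the weak$^{*}$ topology on $w(H)$ agrees with the topology of $H$'') is right in substance, but the nontrivial direction needs the compactly supported positive-definite bump functions the paper constructs explicitly; continuity of the elements of $B(H)$ alone only gives continuity of $w$, not of $w^{-1}$.
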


\begin{remark}\label{remark1}
When $\alpha \colon H \to G$ is a group isomorphism, the map $x \mapsto \alpha(x^{-1})$ is an anti-group isomorphism, and vice versa.
\end{remark}

We will apply lemmas to prove Proposition \ref{unital}. While proving lemmas, we borrow from Walter's methods in \cite{Wal}. However, we require a more general argument because we consider real linear cases. Thus, we included this proof for convenience unless the statement could not be directly deduced from other literature.

From Lemma \ref{projection} to Lemma \ref{+or-} we assume that $T^{*}(w(e_2))=w(e_1)$. Hence, we have 
\begin{equation}\label{unitalequation}
    T^{*}(\varphi)=pJ(\varphi)+(1-p)J(\varphi^{*}), \quad \varphi \in W^{*}(H)
\end{equation}
by \eqref{jyusunomoto}.

\begin{lemma}\label{projection}
We have 
\[
T^{*}(\varphi)=
\begin{cases}
     pJ(\varphi) \quad & \text{if $\varphi \in \sigma(B(H))^{+}$,} \\
    (1-p)J(\varphi^{*}) \quad & \text{if $\varphi \in \sigma(B(H))^{-}$}.
  \end{cases}\\
\]
\end{lemma}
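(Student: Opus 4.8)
The plan is to combine the structure formula \eqref{unitalequation} for $T^*$ with the characterization of $\sigma(B(H))^{+}$ and $\sigma(B(H))^{-}$ coming from Lemma \ref{2}. Recall that with the normalization $T^*(w(e_2))=w(e_1)$ we have $T^*(\varphi)=pJ(\varphi)+(1-p)J(\varphi^*)$, where $J$ is a Jordan $*$-isomorphism of $W^*(H)$ onto $W^*(G)$ and $p$ is a central projection in $W^*(G)$. First I would fix $\varphi\in\sigma(B(H))$ and apply Lemma \ref{2}: if $\varphi\in\sigma(B(H))^{+}$ then $T^*(i\varphi)=iT^*(\varphi)$, while if $\varphi\in\sigma(B(H))^{-}$ then $T^*(i\varphi)=-iT^*(\varphi)$. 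On the other hand, $i\varphi$ is again an element of $W^*(H)$, so the structure formula \eqref{unitalequation} also applies to it; since $J$ is complex linear (it is a Jordan $*$-isomorphism of von Neumann algebras) we get $J(i\varphi)=iJ(\varphi)$, and since the involution is conjugate-linear, $J((i\varphi)^*)=J(-i\varphi^*)=-iJ(\varphi^*)$. Hence
\[
T^*(i\varphi)=pJ(i\varphi)+(1-p)J((i\varphi)^*)=ipJ(\varphi)-i(1-p)J(\varphi^*).
\]

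Now I would simply equate the two expressions for $T^*(i\varphi)$. In the $+$ case, $ipJ(\varphi)-i(1-p)J(\varphi^*)=i\bigl(pJ(\varphi)+(1-p)J(\varphi^*)\bigr)$, which forces $2i(1-p)J(\varphi^*)=0$, i.e. $(1-p)J(\varphi^*)=0$; substituting back into \eqref{unitalequation} gives $T^*(\varphi)=pJ(\varphi)$. Symmetrically, in the $-$ case we obtain $pJ(\varphi)=0$ and therefore $T^*(\varphi)=(1-p)J(\varphi^*)$. This is exactly the claimed dichotomy.

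I do not expect a serious obstacle here; the lemma is essentially a bookkeeping consequence of the already-established facts. The one point that must be handled with care is the behavior of $J$ on scalar multiples: one needs that $J$ is $\mathbb{C}$-linear (so $J(i\varphi)=iJ(\varphi)$) but that $\varphi\mapsto\varphi^*$ is conjugate-linear, so that the $(1-p)$-summand picks up a sign flip relative to the $p$-summand. This asymmetry is precisely what makes the two cases separate cleanly. Once that is in place, equating coefficients of the (complementary, hence independent) central summands $p$ and $1-p$ finishes the argument; one could also phrase it by multiplying the identity $T^*(i\varphi)=\pm iT^*(\varphi)$ on the left by $p$ and by $1-p$ separately.
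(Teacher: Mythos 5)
Your proposal is correct and follows essentially the same route as the paper: apply Lemma \ref{2} to get $T^*(i\varphi)=\pm iT^*(\varphi)$, expand $T^*(i\varphi)$ via \eqref{unitalequation} using the complex linearity of $J$ and the conjugate-linearity of the involution to obtain $ipJ(\varphi)-i(1-p)J(\varphi^*)$, and equate to conclude $(1-p)J(\varphi^*)=0$ in the $+$ case and $pJ(\varphi)=0$ in the $-$ case. The point you flag as needing care (the sign asymmetry between the two central summands) is exactly the mechanism the paper's proof relies on.
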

 \begin{proof}
Let $\varphi \in \sigma(B(H))^{+} $. By Lemma \ref{2}, we have
 \begin{multline*}
 ipJ(\varphi)+i(1-p)J(\varphi^{*})=iT^{*}(\varphi)=T^{*}(i\varphi)\\
 =pJ(i\varphi)+(1-p)J((i\varphi)^{*})=ipJ(\varphi)-i(1-p)J(\varphi^{*}).
 \end{multline*}
 Hence we obtain 
\begin{equation*}
(1-p) J(\varphi^{*})=0.
\end{equation*} 
This implies that $T^{*}(\varphi)=pJ(\varphi)$.  Let $\varphi \in \sigma(B(H))^{-} $. By Lemma \ref{2}, we get  
 \begin{multline*}
 -ipJ(\varphi)-i(1-p)J(\varphi^{*})=-iT^{*}(\varphi)=T^{*}(i\varphi)\\
 =pJ(i\varphi)+(1-p)J((i\varphi)^{*})=ipJ(\varphi)-i(1-p)J(\varphi^{*}).
 \end{multline*}
 Thus we obtain that $pJ(\varphi)=0$ and $T^{*}(\varphi)=(1-p)J(\varphi^{*})$. 

 \end{proof}
The next corollary is useful for determining whether a given $\varphi \in \sigma(B(H))$  belongs to $\sigma(B(H))^+$ or $\sigma(B(H))^-$. 
 \begin{corollary}\label{useful}
 Let $\varphi \in \sigma(B(H))$. 
     \begin{itemize}
	\item[(i)] $\varphi \in \sigma(B(H))^+$ $\Leftrightarrow$ $T^{*}(\varphi)=pJ(\varphi)$ $\Leftrightarrow$  $(1-p)J(\varphi^*)=0$.
     
	\item[(ii)] $\varphi \in \sigma(B(H))^-$ $\Leftrightarrow$ $T^{*}(\varphi)=(1-p)J(\varphi^*)$ $\Leftrightarrow$  $pJ(\varphi)=0$.
\end{itemize}
     
 \end{corollary}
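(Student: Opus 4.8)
The plan is to derive everything from Lemma~\ref{projection} together with the disjoint decomposition $\sigma(B(H))=\sigma(B(H))^{+}\cup\sigma(B(H))^{-}$ and the injectivity of $T^{*}$. Since we are working under the standing assumption $T^{*}(w(e_2))=w(e_1)$, formula \eqref{unitalequation} is available, namely $T^{*}(\varphi)=pJ(\varphi)+(1-p)J(\varphi^{*})$ for every $\varphi\in W^{*}(H)$; this is the only extra ingredient needed beyond Lemma~\ref{projection}.

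I would prove (i), the argument for (ii) being symmetric after interchanging the roles of $pJ(\varphi)$ and $(1-p)J(\varphi^{*})$ and of $\sigma(B(H))^{+}$ and $\sigma(B(H))^{-}$. For the first equivalence: if $\varphi\in\sigma(B(H))^{+}$ then $T^{*}(\varphi)=pJ(\varphi)$ directly by Lemma~\ref{projection}. Conversely, suppose $T^{*}(\varphi)=pJ(\varphi)$. Since $\sigma(B(H))=\sigma(B(H))^{+}\cup\sigma(B(H))^{-}$ is a disjoint union, either $\varphi\in\sigma(B(H))^{+}$, and we are done, or $\varphi\in\sigma(B(H))^{-}$; in the latter case Lemma~\ref{projection} gives both $pJ(\varphi)=0$ and $T^{*}(\varphi)=(1-p)J(\varphi^{*})$, so $T^{*}(\varphi)=pJ(\varphi)=0$. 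But $\varphi\neq 0$ because $\varphi\in\sigma(B(H))$, and $T^{*}$ is injective, being a surjective isometry, so $T^{*}(\varphi)\neq 0$ — a contradiction. Hence $\varphi\in\sigma(B(H))^{+}$. For the second equivalence, I would simply subtract in \eqref{unitalequation}: since $T^{*}(\varphi)-pJ(\varphi)=(1-p)J(\varphi^{*})$, the identity $T^{*}(\varphi)=pJ(\varphi)$ holds if and only if $(1-p)J(\varphi^{*})=0$. Chaining these two equivalences yields (i).

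The same three-step pattern proves (ii): Lemma~\ref{projection} gives one implication of the first equivalence, the disjointness of the decomposition combined with the injectivity of $T^{*}$ and $\varphi\neq 0$ gives its converse, and \eqref{unitalequation} gives the algebraic equivalence with $pJ(\varphi)=0$. There is no genuine obstacle here; the corollary is essentially a bookkeeping consequence of Lemma~\ref{projection}. The only point requiring a moment of care is precisely the use of $\varphi\neq 0$ and the injectivity of $T^{*}$ to rule out the degenerate possibility that $\varphi$ sits in the ``wrong'' half of the decomposition, which is what forces the listed conditions to be equivalent rather than merely one-directional.
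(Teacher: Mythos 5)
Your proposal is correct and follows essentially the same route as the paper: Lemma~\ref{projection} for the forward implications, formula \eqref{unitalequation} for the algebraic equivalence with the vanishing condition, and the disjointness of $\sigma(B(H))=\sigma(B(H))^{+}\cup\sigma(B(H))^{-}$ together with the injectivity of $T^{*}$ and $\varphi\neq 0$ to rule out the wrong half. The only cosmetic difference is that the paper closes a single cycle of three implications whereas you prove the two biconditionals separately; the ingredients are identical.
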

\begin{proof}
 (i) Let $\varphi \in \sigma(B(H))^{+}$. Lemma \ref{projection} shows that $T^{*}(\varphi)=pJ(\varphi)$. If $T^{*}(\varphi)=pJ(\varphi)$ holds then (\ref{unitalequation}) implies that $(1-p) J(\varphi^{*})=0$. Finally, suppose that $(1-p) J(\varphi^{*})=0$. We prove that $\varphi\in \sigma(B(H))^+$. Suppose not. Then, $\varphi\in \sigma(B(H))^-$. Then by Lemma \ref{projection}, we have $T^*(\varphi)=(1-p)J(\varphi^*)=0$, as $T^*$ is a surjective real linear isometry,  we have that  $\varphi=0$ which is impossible. Thus, we obtain that $\varphi\in \sigma(B(H))^+$.\\
 (ii)  In a similar way to (i), we can prove the statement. 
\end{proof}

 \begin{lemma}\label{involution}
 If $\varphi  \in \sigma(B(H))^{+}$ then $\varphi^{*} \in \sigma(B(H))^{+}$. If $\varphi  \in \sigma(B(H))^{-}$ then $\varphi^{*} \in \sigma(B(H))^{-}$. 
 \end{lemma}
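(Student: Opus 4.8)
The plan is to exploit the algebraic description of $\sigma(B(H))^{\pm}$ provided by Corollary \ref{useful}, together with two structural facts already available: $J$ is a Jordan $*$-isomorphism, so $J(\varphi^{*})=J(\varphi)^{*}$; and $p$, hence also $1-p$, is a self-adjoint \emph{central} projection in $W^{*}(G)$. Recall also that $\varphi^{*}\in\sigma(B(H))$ whenever $\varphi\in\sigma(B(H))$, by Lemma \ref{basic}(1), so Corollary \ref{useful} is applicable to $\varphi^{*}$ as well.

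Suppose first $\varphi\in\sigma(B(H))^{+}$. Then Corollary \ref{useful}(i) gives $(1-p)J(\varphi^{*})=0$. Taking adjoints and using $J(\varphi^{*})^{*}=J\!\left((\varphi^{*})^{*}\right)=J(\varphi)$ and $(1-p)^{*}=1-p$, we obtain $J(\varphi)(1-p)=0$; since $1-p$ is central this coincides with $(1-p)J(\varphi)=0$, i.e. $(1-p)J\!\left((\varphi^{*})^{*}\right)=0$. By Corollary \ref{useful}(i) applied to $\varphi^{*}$, this says precisely $\varphi^{*}\in\sigma(B(H))^{+}$.

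The case $\varphi\in\sigma(B(H))^{-}$ is symmetric, using Corollary \ref{useful}(ii) with $p$ in place of $1-p$: from $pJ(\varphi)=0$ we get by taking adjoints $J(\varphi)^{*}p=J(\varphi^{*})p=0$, and centrality of $p$ yields $pJ(\varphi^{*})=0$, so $\varphi^{*}\in\sigma(B(H))^{-}$ by Corollary \ref{useful}(ii).

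There is essentially no obstacle here; the only point requiring attention is that it is the \emph{centrality} of $p$ (not merely that it is a projection) that permits passing between left and right multiplication after taking adjoints, which is exactly what matches the one-sided conditions appearing in Corollary \ref{useful}.
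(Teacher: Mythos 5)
Your proof is correct and follows essentially the same route as the paper: both arguments hinge on $J$ being a $*$-map, on $p$ (hence $1-p$) being a self-adjoint central projection, and on the equivalences in Corollary \ref{useful}. The paper packages the adjoint-taking as the identity $T^{*}(\varphi^{*})=(T^{*}(\varphi))^{*}=pJ(\varphi^{*})$ via Lemma \ref{projection}, while you take adjoints directly in the conditions $(1-p)J(\varphi^{*})=0$ and $pJ(\varphi)=0$; the content is the same.
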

 \begin{proof}
Let $\varphi \in \sigma(B(H))^{+}$. By applying Lemma \ref{basic}  , we have $\varphi^{*} \in \sigma(B(H))$. Since $J$ is a Jordan $*$-isomorphism and $p$ is a central projection, we obtain $T^{*}(\varphi^{*})=pJ(\varphi^{*})+(1-p)J((\varphi^{*})^{*})=(pJ(\varphi)+(1-p)J(\varphi^{*}))^{*}=(T^{*}(\varphi))^{*}$. Lemma \ref{projection} states that 
\begin{equation}\label{projep}
T^{*}(\varphi^{*})=(T^{*}(\varphi))^{*}=(pJ(\varphi))^{*}=pJ(\varphi^{*}).
\end{equation} 
Then by Corollary \ref{useful} we have $\varphi^*\in \sigma(B(H))^+$.
A similar argument  yields $\varphi^{*} \in \sigma(B(H))^{-}$ for $\varphi\in \sigma(B(H))^-$. 

 \end{proof}

\begin{lemma}\label{alpha}
We have $J(w(y)w(x))=J(w(y))J(w(x))$ for any $x,y \in H$ or  $J(w(y)w(x))=J(w(x))J(w(y))$ for any $x,y \in H$.
\end{lemma}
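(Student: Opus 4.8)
The plan is to transfer the whole statement to a property of a single bijection $\beta\colon H\to G$ and then settle it by a group-theoretic dichotomy, following the spirit of Walter's arguments.

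First I would check that $p\in\{0,1\}$. Fix $x\in H$. Since $w(x)$ is a unitary of $W^{*}(H)$ and $J$ is a Jordan $*$-isomorphism, $J(w(x))$ and $J(w(x)^{*})=J(w(x))^{-1}$ are unitaries of $W^{*}(G)$; hence $(1-p)J(w(x)^{*})=0$ forces $1-p=0$, and $pJ(w(x))=0$ forces $p=0$. By Corollary \ref{useful} these are precisely the conditions $w(x)\in\sigma(B(H))^{+}$ and $w(x)\in\sigma(B(H))^{-}$, and since $\sigma(B(H))=\sigma(B(H))^{+}\cup\sigma(B(H))^{-}$, one of them must hold, so $p\in\{0,1\}$. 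By \eqref{unitalequation} it follows that either $J=T^{*}$ (if $p=1$) or $J(\psi)=T^{*}(\psi^{*})$ for all $\psi\in W^{*}(H)$ (if $p=0$). In either case $J$ maps $\sigma(B(H))$ onto $\sigma(B(G))$, by Lemma \ref{multi} together with Lemma \ref{basic}(1). As $J$ also preserves unitary elements, Lemma \ref{basic}(2) shows that $J$ maps $w(H)$ bijectively onto $w(G)$; writing $J(w(x))=w(\beta(x))$ defines a bijection $\beta\colon H\to G$, with $\beta(e_2)=e_1$ since $w(e_2)$ and $w(e_1)$ are the units of $W^{*}(H)$ and $W^{*}(G)$.

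Next I would use that $J$ is a Jordan homomorphism, so that $J(w(x)w(y)+w(y)w(x))=J(w(x))J(w(y))+J(w(y))J(w(x))$ for all $x,y\in H$. Since $w$ is a representation, $w(x)w(y)=w(xy)$ and $J(w(x))J(w(y))=w(\beta(x))w(\beta(y))=w(\beta(x)\beta(y))$, so the additivity of $J$ gives
\[
w(\beta(xy))+w(\beta(yx))=w(\beta(x)\beta(y))+w(\beta(y)\beta(x)),\qquad x,y\in H.
\]
The elements $w(g)$, $g\in G$, are pairwise distinct characters of $B(G)$ (the universal representation is injective on $G$) and therefore linearly independent, so comparing coefficients yields the multiset identity
\[
\{\,\beta(xy),\beta(yx)\,\}=\{\,\beta(x)\beta(y),\beta(y)\beta(x)\,\},\qquad x,y\in H .
\]
In particular $\beta(x^{-1})=\beta(x)^{-1}$ (take $y=x^{-1}$) and $\beta(xy)\in\{\beta(x)\beta(y),\beta(y)\beta(x)\}$ for all $x,y$; using instead the identity $J(aba)=J(a)J(b)J(a)$ one also obtains the auxiliary relation $\beta(xyx)=\beta(x)\beta(y)\beta(x)$.

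What remains is the purely group-theoretic assertion that a bijection $\beta\colon H\to G$ with $\beta(e_2)=e_1$ obeying the above multiset identity must be a group isomorphism or a group anti-isomorphism; this is the part I expect to require the most care. I would argue it as follows. If $\beta(a)\beta(b)=\beta(b)\beta(a)$, then the multiset identity forces $\beta(ab)=\beta(ba)=\beta(a)\beta(b)$. Suppose $\beta$ is not a homomorphism and fix $a,b$ with $\beta(ab)=\beta(b)\beta(a)\neq\beta(a)\beta(b)$. Expanding $\beta(abz)$ through the pairs $(ab,z)$ and $(a,bz)$, comparing with the multiset identity, and using $\beta(xyx)=\beta(x)\beta(y)\beta(x)$ to eliminate the accidental coincidences, one deduces $\beta(abz)=\beta(ab)\beta(z)$ for every $z$; propagating this through the group one obtains $\beta(xy)=\beta(y)\beta(x)$ for all $x,y$, i.e.\ $\beta$ is an anti-isomorphism. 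The delicate point, and the main obstacle, is the bookkeeping of the cases in which two values of $\beta$ happen to commute. Once this is settled, translation back finishes the proof: if $\beta$ is an isomorphism then $J(w(y)w(x))=w(\beta(yx))=w(\beta(y)\beta(x))=J(w(y))J(w(x))$ for all $x,y$, and if $\beta$ is an anti-isomorphism then $J(w(y)w(x))=w(\beta(yx))=w(\beta(x)\beta(y))=J(w(x))J(w(y))$ for all $x,y$.
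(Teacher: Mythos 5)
Your reduction is sound as far as it goes. The observation that $p\in\{0,1\}$ (because for the unitary $w(x)$ either $(1-p)J(w(x)^{*})=0$ or $pJ(w(x))=0$ must hold by Corollary \ref{useful}, and a projection annihilating a unitary is zero) is correct and is in fact a simplification the paper does not make; the passage to a bijection $\beta\colon H\to G$ and the multiset identity $\{\beta(xy),\beta(yx)\}=\{\beta(x)\beta(y),\beta(y)\beta(x)\}$ via linear independence of distinct characters is in substance the paper's step \eqref{jya1}, transported to group elements. Up to that point the two arguments agree.

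The gap is the final dichotomy, which is the entire content of the lemma. You have reduced the statement to: a bijection between groups satisfying $\beta(xy)\in\{\beta(x)\beta(y),\beta(y)\beta(x)\}$ for all $x,y$ is a homomorphism or an anti-homomorphism. This is precisely the half-homomorphism theorem of W.~R.~Scott (Proc.\ Amer.\ Math.\ Soc.\ 8 (1957)), and it is true, but it is not immediate: the assertion that a single non-commuting instance $\beta(ab)=\beta(b)\beta(a)\ne\beta(a)\beta(b)$ "propagates" to global anti-multiplicativity is exactly the part requiring a genuine case analysis, and your sketch names that difficulty ("the bookkeeping of the cases in which two values of $\beta$ happen to commute") without resolving it. As written, the proof is incomplete at its critical step. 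Note that the paper avoids this route entirely: it applies Kadison's structure theorem to split $J$ by central projections $Z_h$, $Z_g$ into a $*$-isomorphism and an anti-$*$-isomorphism, so that $A_x=\{y\in H\mid (w(xy)-w(yx))Z_h=0\}$ and $B_x=\{y\in H\mid (w(xy)-w(yx))(1-Z_h)=0\}$ are subgroups covering $H$, and the elementary fact that a group is not the union of two proper subgroups yields the dichotomy in a few lines. To repair your argument, either give a complete proof of the half-homomorphism dichotomy (or cite Scott's theorem), or switch to the Kadison-plus-subgroup-covering argument.
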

\begin{proof}
We have $pT^{*}(\varphi)=pJ(\varphi)$ and $(1-p)T^{*}(\varphi)=(1-p)J(\varphi^{*})$ by (\ref{unitalequation}). Hence we have 
\[
J(\varphi)=pJ(\varphi)+(1-p)J(\varphi)=pT^{*}(\varphi)+(1-p)T^{*}(\varphi^{*}).
\]
Let $\varphi \in \sigma(B(H))$.  When $\varphi \in \sigma(B(H))^{+}$, Lemma \ref{projection} and Lemma \ref{involution} show that 
\begin{equation}\label{jyamira3}
J(\varphi)=ppJ(\varphi)+(1-p)pJ(\varphi^{*})=pJ(\varphi)=T^{*}(\varphi).
\end{equation}
When $\varphi \in \sigma(B(H))^{-}$, we also obtain 
\begin{equation}\label{jyamira4}
J(\varphi)=p(1-p)J(\varphi^{*})+(1-p)(1-p)J(\varphi)=(1-p)J(\varphi)=T^{*}(\varphi^{*}).
\end{equation}
Therefore, in either case, we obtain $J(\varphi) \in \sigma(B(G))$ by Lemma \ref{multi}. Moreover, $J(\sigma(B(H)))=\sigma(B(G))$.
In fact, let $\psi\in \sigma(B(G))$ be arbitrary. Then by Lemma \ref{multi}, $\varphi\in \sigma(B(H))$ exists with $T^*(\varphi)=\psi$. If $\varphi\in \sigma(B(G))^{+}$, then \eqref{jyamira3} asserts that $J(\varphi)=\psi$. Suppose that $\varphi\in \sigma(B(G))^{-}$. Then we have $\varphi^*\in \sigma(B(G))^{-}$ by Lemma \ref{involution}. Then by \eqref{jyamira4} we have $J(\varphi^*)=T^*(\varphi)=\psi$. 
By Lemma \ref{basic} we have $J(\varphi_1\varphi_2), 
J(\varphi_2\varphi_1), J(\varphi_1)J(\varphi_2), J(\varphi_2)J(\varphi_1)\in \sigma(B(G))$. As $J$ is a Jordan $*$-isomorphism, we have
\[
J(\varphi_1\varphi_2)+J(\varphi_2\varphi_1)=
J(\varphi_1)J(\varphi_2)+J(\varphi_2)J(\varphi_1).
\]
On the other hand, in general, it is well known that elements in the spectrum  $\sigma({\mathcal{A}})$ of 
a commutative Banach algebra $\mathcal{A}$ are linearly independent.
Applying this fact, we can prove by several calculations that
\begin{equation}\label{jya1}
\text{$J(\varphi_1\varphi_2)=J(\varphi_1)J(\varphi_2)$ or $J(\varphi_1\varphi_2)=J(\varphi_2)J(\varphi_1)$. }
\end{equation}

 Since any Jordan $*$-isomorphism  preserves unitary elements and $J|_{\sigma(B(H))}$ is a bijection from $\sigma(B(H))$ onto  $\sigma(B(G))$, 
$J|_{\sigma(B(H))}$ maps the unitary elements of $\sigma(B(H))$ onto the unitary elements of $\sigma(B(G))$. By Lemma \ref{basic},  this implies that $J|_{w(H)}: w(H) \to w(G)$, which is a bijection. By \cite[Theorem 10]{Kadison}, as $J:W^{*}(H) \to W^{*}(G)$ is a Jordan $*$-isomorphism between von Neumann algebras, there are central projections $Z_h \in W^{*}(H)$ and $Z_g \in W^{*}(G)$ such that $J|_{W^{*}(H)Z_h}: W^{*}(H)Z_h \to W^{*}(G)Z_g$ is an algebra $*$-isomorphism and $J|_{W^{*}(H)(1-Z_h)}: W^{*}(H)(1-Z_h) \to W^{*}(G)(1-Z_g)$ is an anti-algebra $*$-isomorphism.
Then, since $Z_h$ is a central projection, and a Jordan $*$ isomorphism preserves commutativity we have 
\begin{multline}\label{commutativeJ}
J(abZ_h)=J(aZ_hbZ_h)=J(aZ_h)J(bZ_h)\\
=J(a)J(Z_h)J(b)J(Z_h)=J(a)J(b)J(Z_h)
\end{multline}
for every $a,b\in W^*(H)$. Similarly, we have
\begin{equation}\label{anticommutativeJ}
    J(ab(1-Z_h))=J(b)J(a)J(1-Z_h)
\end{equation}
for every $a,b\in W^*(H)$.

For any $x \in H$, we define $A_x=\{ y \in H \mid (w(xy)-w(yx))Z_h=0\}$ and $B_x=\{ y \in H \mid (w(xy)-w(yx))(1-Z_h)=0 \}$. By a simple calculation, it is easy to check that $A_x$ and $B_x$ are subgroups of $H$. We prove that $A_x \cup B_x=H$.  Let $y\in H$ be arbitrary. Then by \eqref{jya1} we have $J(w(x)w(y))=J(w(x))J(w(y))$ or $J(w(y))J(w(x))$. Suppose first that $J(w(x)w(y))=J(w(x))J(w(y))$. Then we have 
\begin{equation*}
\begin{split}
    &J((w(x)w(y)-w(y)w(x))(1-Z_h))\\
    &=J(w(x)w(y)(1-Z_h))-J(w(y)w(x)(1-Z_h))\\
    &=J(w(x)w(y))J(1-Z_h)-J(w(x))J(w(y))J(1-Z_h)\\
    &=J(w(x))J(w(y))J(1-Z_h)-J(w(x))J(w(y))J(1-Z_h)=0\\
    \end{split}
    \end{equation*}
    since $J(w(x)w(y))=J(w(x))J(w(y))$ and (\ref{anticommutativeJ}) hold. 
Since $J$ is injective, we have that $y \in B_x$. Suppose contrary that $J(w(x)w(y))=J(w(y))J(w(x))$. Like in  the above discussion with (\ref{commutativeJ}), we see that $y\in A_x$. We conclude that $A_x\cup B_x=H$.

As $A_x$ and $B_x$ are subgroups of $H$, we obtain that either $A_x =H$ or $ B_x=H$.
Suppose not: $A_x\ne H$ and $B_x\ne H$. Thus, $y_1\in H\setminus A_x$ and $y_2\in H\setminus B_x$ exist. In this case $y_1\in B_x$ and $y_2\in A_x$ since $A_x\cup B_x=H$. Consider the element $y_1y_2\in H$. Then $y_1y_2\in A_x$ or $y_1y_2\in B_x$. Suppose that $y_1y_2\in B_x$. As $y_1^{-1}\in B_x$ we obtain $y_2=y_1^{-1}y_1y_2\in B_x$, which contradicts the choice of $y_2$. Similarly, we have a contradiction if we assume that $y_1y_2\in A_x$. We conclude that $A_x=H$ or $B_x=H$.

In addition as $A=\{ x \in H \mid A_x=H\}$ and $B=\{ x \in H \mid B_x=H\}$ are subgroups of $H$ and $A \cup B=H$, it must be either $A=H$ or $B=H$ according to a similar argument above.  When $A=H$, we have $J(w(x)w(y))=J(w(y))J(w(x))$ for any $x,y \in H$. 
To prove this, note that $w(x)w(y)Z_h=w(y)w(x)Z_h$. By (\ref{commutativeJ}) and (\ref{anticommutativeJ}), we have 
\begin{equation*}
\begin{split}
    J(w(x)w(y))&=J(w(x)w(y))J(Z_h)+J(w(x)w(y))J(1-Z_h)\\
&=J(w(x)w(y)Z_h)+J(w(x)w(y)(1-Z_h))\\
&=J(w(y)w(x)Z_h)+J(w(x)w(y)(1-Z_h))\\
&=J(w(y))J(w(x))J(Z_h)+J(w(y))J(w(x))J(1-Z_h)\\
&=J(w(y))J(w(x)).
\end{split}
\end{equation*}
By a similar argument, we conclude that $B=H$ implies that  $J(w(x)w(y))=J(w(x))J(w(y))$ for any $x,y \in H$. 
\end{proof}

\begin{definition}
We say that $J$ is multiplicative $($resp. anti-multicative$)$ if $J(w(x)w(y))=J(w(x))J(w(y))$ $($resp. $J(w(x)w(y))=J(w(y))J(w(x)) )$ for every pair $x,y\in H$. 
We define $\alpha:H \to G$ by 
$\alpha(x)=w^{-1}\circ J\circ w (x)$, $x\in H$.
\end{definition}

Note that $\alpha$ is a group isomorphism if $J$ is multiplicative and an anti-group isomorphism if $J$ is anti-multiplicative. 

\begin{lemma}\label{pointwise}
If $w(x) \in \sigma(B(H))^{+}$ then $T(f)(x)=f(\alpha(x))$. 
If $w(x) \in \sigma(B(H))^{-}$ then $T(f)(x)=\overline{f(\alpha(x^{-1}))}$. 
\end{lemma}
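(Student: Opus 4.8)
The plan is to evaluate $T^{*}(w(x))(f)$ in two ways and compare. On one hand I will use the defining formula \eqref{dualmap} for $T^{*}$ together with the fact that $w(x)$ belongs to $\sigma(B(H))^{+}$ or to $\sigma(B(H))^{-}$; on the other hand I will use the description of $T^{*}$ on the spectrum obtained in Lemma \ref{projection} (equivalently \eqref{jyamira3}--\eqref{jyamira4}) to recognize $T^{*}(w(x))$ as a point evaluation on $G$, after which the identity $w(y)(g)=g(y)$ finishes the job.

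First I would fix $f\in B(G)$ and write $w(x)(Tf)=a+bi$ with $a,b\in\R$; since $w(x)$ is the point evaluation at $x$ on $B(H)$, this is exactly $T(f)(x)$. If $w(x)\in\sigma(B(H))^{+}$, then $w(x)(T(if))=i\,w(x)(Tf)$ by the remark following the definition of $\sigma(B(H))^{\pm}$, so \eqref{dualmap} gives
\[
T^{*}(w(x))(f)=\re w(x)(Tf)-i\re\bigl(i\,w(x)(Tf)\bigr)=a+bi=T(f)(x).
\]
If $w(x)\in\sigma(B(H))^{-}$, then $w(x)(T(if))=-i\,w(x)(Tf)$ and the same computation yields $T^{*}(w(x))(f)=a-bi=\overline{T(f)(x)}$.

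Next I would identify $T^{*}(w(x))$. When $w(x)\in\sigma(B(H))^{+}$, \eqref{jyamira3} gives $T^{*}(w(x))=J(w(x))=w(\alpha(x))$, hence $T^{*}(w(x))(f)=f(\alpha(x))$ by point evaluation on $G$; comparing with the previous paragraph, $T(f)(x)=f(\alpha(x))$. When $w(x)\in\sigma(B(H))^{-}$, recall that $w(x)$ is unitary in $W^{*}(H)$, so $w(x)^{*}=w(x)^{-1}=w(x^{-1})$; by Lemma \ref{involution}, $w(x^{-1})\in\sigma(B(H))^{-}$ too, so applying \eqref{jyamira4} with $\varphi=w(x^{-1})$ gives $J(w(x^{-1}))=T^{*}\bigl((w(x^{-1}))^{*}\bigr)=T^{*}(w(x))$, i.e.\ $T^{*}(w(x))=w(\alpha(x^{-1}))$. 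Thus $T^{*}(w(x))(f)=f(\alpha(x^{-1}))$, and combining with $T^{*}(w(x))(f)=\overline{T(f)(x)}$ gives $T(f)(x)=\overline{f(\alpha(x^{-1}))}$.

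I do not expect a genuine obstacle here; the argument is a two-way evaluation together with bookkeeping. The only points requiring attention are keeping the cases $\sigma(B(H))^{\pm}$ apart, tracking the sign of the imaginary part produced by \eqref{dualmap}, and remembering that the adjoint of the unitary $w(x)$ is $w(x^{-1})$, which is precisely what lets Lemma \ref{involution} and \eqref{jyamira4} be invoked in the minus case.
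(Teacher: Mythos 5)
Your proposal is correct and follows essentially the same route as the paper: both arguments compute $T^{*}(w(x))(f)$ in two ways, once via the defining formula \eqref{dualmap} together with the $\sigma(B(H))^{\pm}$ dichotomy, and once via Lemma \ref{projection} and the identities \eqref{jyamira3}--\eqref{jyamira4} identifying $T^{*}(w(x))$ with the point evaluation $w(\alpha(x))$ or $w(\alpha(x^{-1}))$. Your use of $w(x)(T(if))=\pm i\,w(x)(Tf)$ to collapse \eqref{dualmap} in one step is a slight streamlining of the paper's separate computation of $f(\alpha(x))$ and $\pm if(\alpha(x))$ via Lemma \ref{2}, but it is not a different method.
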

\begin{proof}
For any $f \in B(G)$, note that 
\begin{multline*}
f(\alpha(x))=w(\alpha(x))(f)=J(w(x))(f)\\
=pT^{*}(w(x))(f)+(1-p)T^{*}(w(x)^{*})(f). 
\end{multline*}
If $w(x) \in \sigma(B(H))^{+}$ then we have by Lemma \ref{2} and Lemma \ref{projection} that 
\begin{equation*}
\begin{split}
f(\alpha(x))&=pT^{*}(w(x))(f)+(1-p)T^{*}(w(x)^{*})(f)\\&=ppJ(w(x))(f)+(1-p)pJ(w(x)^{*})(f)\\
&=pJ(w(x))(f)=T^{*}(w(x))(f)= \re w(x)(T(f))-i \re w(x)(T(if))
\end{split}
\end{equation*}
and
\begin{equation*}
\begin{split}
if(\alpha(x))&=iT^{*}(w(x))(f)=T^{*}(iw(x))(f)\\
&=\re iw(x)(T(f))-i \re iw(x)(T(if))\\
&=-\im w(x)(T(f))+i \im w(x)(T(if)).
\end{split}
\end{equation*}
Therefore we have 
\begin{multline*}
f(\alpha(x))=\re f(\alpha(x))+i \im f(\alpha(x))\\
=\re T(f)(x)+i \im T(f)(x)=T(f)(x)
\end{multline*}
for any $f \in B(G)$ and $x \in H$ with $w(x)\in \sigma(B(H))^{+}$.
If  $w(x) \in \sigma(B(H))^{-}$ then we have by Lemma \ref{2} and Lemma \ref{projection} that 
\begin{equation*}
\begin{split}
f(\alpha(x))&=pT^{*}(w(x))(f)+(1-p)T^{*}(w(x)^{*})(f)\\&=p(1-p)J(w(x)^{*})(f)+(1-p)(1-p)J(w(x))(f)\\
&=T^{*}(w(x)^{*})(f)= \re w(x^{-1})(T(f))-i \re w(x^{-1})(T(if))
\end{split}
\end{equation*}
and
\begin{equation*}
\begin{split}
-if(\alpha(x))&=-iT^{*}(w(x)^{*})(f)=T^{*}(iw(x^{-1}))(f)\\
&=\re iw(x^{-1})(T(f))-i \re iw(x^{-1})(T(if))\\
&=-\im w(x^{-1})(T(f))+i \im w(x^{-1})(T(if)).
\end{split}
\end{equation*}
We obtain that 
\begin{multline}
  \overline{f(\alpha(x))}=\re f(\alpha(x))-i \im f(\alpha(x)) \\
  =
  \re T(f)(x^{-1})+i \im T(f)(x^{-1})=T(f)(x^{-1}).  
\end{multline}
\end{proof}

\begin{lemma}\label{+or-}
Suppose that $w(x),w(y)\in \sigma(B(H))^+$ $($resp. $\sigma(B(H))^-$$)$. Then $w(xy)\in \sigma(B(H))^+$ $($resp. $\sigma(B(H))^-$$)$. We have a dichotomy 
 $w(H)\subset\sigma(B(H))^+$ or $w(H)\subset \sigma(B(H))^-$.
\end{lemma}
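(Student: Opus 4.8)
The plan is to deduce both assertions from a projection-based reformulation of membership in $\sigma(B(H))^{\pm}$, combined with the dichotomy for $J$ established in Lemma~\ref{alpha}. First I would record: for $\varphi\in\sigma(B(H))$,
\[
\varphi\in\sigma(B(H))^{+}\ \Leftrightarrow\ (1-p)J(\varphi)=0,\qquad \varphi\in\sigma(B(H))^{-}\ \Leftrightarrow\ pJ(\varphi)=0 .
\]
The forward implications are precisely \eqref{jyamira3} and \eqref{jyamira4}. For the converse of the first, if $(1-p)J(\varphi)=0$ but $\varphi\notin\sigma(B(H))^{+}$, then $\varphi\in\sigma(B(H))^{-}$ since the union is disjoint, so \eqref{jyamira4} gives $J(\varphi)=(1-p)J(\varphi)=0$, contradicting the injectivity of $J$ and $\varphi\neq0$; the other converse is symmetric. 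In particular the two conditions are mutually exclusive and together exhaust $\sigma(B(H))$.

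For the product closure, let $w(x),w(y)\in\sigma(B(H))^{+}$. Since $w$ is a representation, $w(xy)=w(x)w(y)$, which lies in $w(H)\subset\sigma(B(H))$ by Lemma~\ref{basic}. By Lemma~\ref{alpha} either $J$ is multiplicative on $w(H)$, in which case, using that $1-p$ is central and $(1-p)J(w(x))=0$,
\[
(1-p)J(w(xy))=\bigl((1-p)J(w(x))\bigr)J(w(y))=0,
\]
or $J$ is anti-multiplicative on $w(H)$, in which case the same computation with $(1-p)J(w(y))=0$ again gives $(1-p)J(w(xy))=0$. Either way $w(xy)\in\sigma(B(H))^{+}$ by the reformulation above. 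The case $w(x),w(y)\in\sigma(B(H))^{-}$ is handled identically with $p$ in place of $1-p$.

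For the dichotomy I would set $A=\{x\in H:w(x)\in\sigma(B(H))^{+}\}$ and $B=\{x\in H:w(x)\in\sigma(B(H))^{-}\}$. By the product closure just proved these sets are closed under multiplication; by Lemma~\ref{involution} together with $w(x)^{*}=w(x^{-1})$ (valid since $w(x)$ is unitary and $w$ is a homomorphism) they are closed under inversion; hence each of $A,B$ is either empty or a subgroup of $H$ (a nonempty product- and inverse-closed subset contains $e_2=xx^{-1}$). Since $\sigma(B(H))=\sigma(B(H))^{+}\cup\sigma(B(H))^{-}$, we have $A\cup B=H$. If one of $A,B$ is empty we are done; otherwise both are subgroups whose union is $H$, and a group is not the union of two proper subgroups (the same argument already used in the proof of Lemma~\ref{alpha}), so $A=H$ or $B=H$, i.e.\ $w(H)\subset\sigma(B(H))^{+}$ or $w(H)\subset\sigma(B(H))^{-}$. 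Equivalently, one may simply test the unit: $J(w(e_2))=1_{W^{*}(G)}$, so by Corollary~\ref{useful} membership of $w(e_2)$ in $\sigma(B(H))^{+}$ forces $p=1$ and in $\sigma(B(H))^{-}$ forces $p=0$, after which $\sigma(B(H))$ collapses entirely to $\sigma(B(H))^{+}$ or to $\sigma(B(H))^{-}$ respectively.

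I do not anticipate a genuine obstacle here: the lemma largely repackages the earlier results. The only points requiring mild care are the bookkeeping between $\varphi$ and $\varphi^{*}$ when quoting Corollary~\ref{useful} — handled by the $*$-invariance of $\sigma(B(H))^{\pm}$ in Lemma~\ref{involution} — and the degenerate case in the union-of-subgroups step where $A$ or $B$ might be empty.
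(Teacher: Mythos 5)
Your proof is correct and follows essentially the same route as the paper: the projection criterion of Corollary~\ref{useful} (you use the equivalent form with $\varphi$ in place of $\varphi^{*}$, which Lemma~\ref{involution} justifies) combined with the multiplicativity dichotomy of Lemma~\ref{alpha} gives closure under products, and the final dichotomy follows from the disjointness of the two classes exactly as in the paper, via the unit $w(e_2)=w(x)w(x^{-1})$. Your closing remark that testing the unit forces $p\in\{0,1\}$ is a clean shortcut the paper does not make explicit, but the substance of the argument is the same.
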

\begin{proof}
For any $w(x), w(y) \in \sigma(B(H))^{+}$, we have
\begin{equation*}
\begin{split}
&(1-p)J(w(xy)^*)
=(1-p)J(w(y)^{*}w(x)^{*})\\
&=\begin{cases}
     (1-p)J(w(y)^{*})(1-p)J(w(x)^{*}) \quad & \text{if $J$ is multiplicative,} \\
    (1-p)J(w(x)^{*})(1-p)J(w(y)^{*})\quad & \text{if $J$ is anti-multiplicative}
  \end{cases}\\
&=0
\end{split}
\end{equation*}
by Corollary \ref{useful}. Applying Corollary \ref{useful} again, we conclude that $w(xy)\in \sigma(B(H))^+$.  
Note that $w(x)^*=w(x^{-1})$ for $x\in \sigma(B(H))$. Hence
if $x\in  H$ with $w(x)\in \sigma(B(H))^+$, then  $w(x)^*\in\sigma(B(H))^+$ by Lemma \ref{involution}, so that $w(e_2)=w(xx^{-1})=w(x)w(x)^*\in \sigma(B(H))^+$ .
By a similar argument, we obtain that $w(xy) \in \sigma(B(H))^{-}$ for any $w(x), w(y) \in \sigma(B(H))^{-}$.
Thus, the existence of an element $x\in H$ with $w(x)\in \sigma(B(H))^-$ similarly ensures that $w(e_2)\in \sigma(B(H))^-$.
We conclude that $w(e_2)\in \sigma(B(H))^+$ or $w(e_2)\in \sigma(B(H))^-$. 
As $\sigma(B(H))^+\cap \sigma(B(H))^-=\emptyset$, we conclude that either $w(H)\subset \sigma(B(H))^+$ or $w(H)\subset \sigma(B(H))^-$ holds.

\end{proof}

Now we prove Proposition \ref{unital}.
\begin{proof}[Proof of Proposition \ref{unital}]
By Lemma \ref{+or-} we have a dichotomy $w(H) \subset \sigma(B(H))^{+}$ or $w(H) \subset \sigma(B(H))^{-}$. 
When $w(H) \subset \sigma(B(H))^{+}$, by Lemma \ref{pointwise} we have
\[
T(f)(x)=f(\alpha(x)), \quad x \in H.
\]
When  $w(H) \subset \sigma(B(H))^{-}$, by Lemma \ref{pointwise} we have
\[
T(f)(x)=\overline{f(\alpha(x^{-1}))}, \quad x \in H.
\]
By Remark \ref{remark1},  we note that $x \mapsto \alpha(x^{-1})$ is also a group isomorphism or an anti-group isomorphism. 

Let $U$ be an open neighborhood of $e_1$ on $G$. There is a continuous positive definite function $f_0$ on $G$, which has compact support,  such that $0 \le f_0 \le 1$ with $f_0(e_1)=1$ and $f_0=0$ on $G \setminus U$. For a net $\{x_{\beta}\}$ with  $x_{\beta} \to e_2$, we have $f_0(\alpha(x_{\beta})) \to f_0(\alpha(e_2))$ as $T(f_0) \in B(H)$. This implies that $\alpha(x_{\beta}) \to \alpha(e_2)=e_1$. Hence $\alpha$ is continuous. By applying a similar argument to $\alpha^{-1}$, we obtain that $\alpha$ is a homeomorphism. Thus we complete the proof.
\end{proof}
Finally, we characterize isometric real algebra isomorphisms on Fourier-Stieltjes algebras. Theorem \ref{realisometryonBG} is a real-linear-counterpart of the theorem of Walter \cite[Theorem 2]{Wal}.

\begin{theorem}\label{realisometryonBG}
Let $G$ and $H$ be locally compact groups.  The map $T: B(G) \to B(H)$ is an isometric real algebra isomorphism if and only if there is a topological isomorphism or a topological anti-isomorphism $\alpha: H \to G$ and $b \in G$ such that 
\[
T(f)(x)= f(b \alpha(x)) \quad  f \in B(G),  \ x \in H,
\]
or 
\[
T(f)(x)= \overline{f(b \alpha(x))} \quad  f \in B(G),  \ x \in H.
\]
\end{theorem}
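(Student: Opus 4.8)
The plan is to bootstrap the general statement from the unital case already established in Proposition \ref{unital}: a suitable translation in $G$ normalises $T^{*}(w(e_2))$ to $w(e_1)$, after which Proposition \ref{unital} applies, and then the converse is settled by writing each displayed map as a composition of standard isometric algebra isomorphisms. Concretely, for the ``only if'' direction, start from an isometric real algebra isomorphism $T\colon B(G)\to B(H)$. As recorded just before Proposition \ref{unital}, $T^{*}(w(e_2))$ is a unitary of $W^{*}(G)$ lying in $\sigma(B(G))$, so by Lemma \ref{basic}(2) there is $b\in G$ with $T^{*}(w(e_2))=w(b)$. Let $\Lambda\colon B(G)\to B(G)$ be the left-translation operator $\Lambda f(y)=f(b^{-1}y)$; left translation is a well-known isometric complex algebra automorphism of $B(G)$, with inverse $\Lambda^{-1}f(y)=f(by)$. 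Put $\widetilde T=T\circ\Lambda$, again an isometric real algebra isomorphism, and note that (since $\Lambda$ is complex linear) $\widetilde T^{*}=\Lambda^{*}\circ T^{*}$ and $\Lambda^{*}(w(x))=w(b^{-1}x)$, whence $\widetilde T^{*}(w(e_2))=\Lambda^{*}(w(b))=w(e_1)$. So $\widetilde T$ meets the hypothesis of Proposition \ref{unital}, which supplies a topological isomorphism or anti-isomorphism $\alpha\colon H\to G$ with $\widetilde T(f)(x)=f(\alpha(x))$ for all $f,x$, or $\widetilde T(f)(x)=\overline{f(\alpha(x))}$ for all $f,x$. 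Since $T=\widetilde T\circ\Lambda^{-1}$ and $\Lambda^{-1}f(y)=f(by)$, these become $T(f)(x)=f(b\alpha(x))$ and $T(f)(x)=\overline{f(b\alpha(x))}$ respectively, which is exactly the asserted form.

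For the ``if'' direction, take a topological isomorphism or anti-isomorphism $\alpha\colon H\to G$ and $b\in G$, and let $T$ be one of the two displayed maps. Write $T=C^{\varepsilon}\circ\Phi_{\alpha}\circ\Lambda_{b}$ with $\varepsilon\in\{0,1\}$, where $\Lambda_{b}f(y)=f(by)$, $\Phi_{\alpha}g=g\circ\alpha$, and $C$ is complex conjugation on $B(H)$. One then checks that each factor is an isometric algebra isomorphism: $\Lambda_{b}$ is an isometric complex algebra automorphism of $B(G)$; $C$ is a conjugate-linear isometric algebra automorphism of $B(H)$, since conjugation preserves continuous positive-definite functions and hence $B(H)$ and is isometric; and $\Phi_{\alpha}\colon B(G)\to B(H)$ is an isometric complex algebra isomorphism. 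The last point is classical when $\alpha$ is a topological isomorphism; when $\alpha$ is a topological anti-isomorphism, use Remark \ref{remark1}: $\beta(x)=\alpha(x^{-1})$ is a topological isomorphism and $\Phi_{\alpha}g=(\Phi_{\beta}g)^{\vee}$, where $h^{\vee}(x)=h(x^{-1})$ is the check operation on $B(H)$, itself an isometric complex algebra automorphism, so $\Phi_{\alpha}$ is again an isometric complex algebra isomorphism. Composing, $T$ is an isometric real algebra isomorphism --- complex linear if $\varepsilon=0$, conjugate-linear if $\varepsilon=1$.

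The conceptual work is entirely contained in the preceding lemmas and in Proposition \ref{unital}; at the level of the theorem there is no deep obstruction, and the points that need care are organisational. One must translate by $b^{-1}$ rather than $b$ so that $\widetilde T^{*}(w(e_2))=w(e_1)$ comes out right, keep track of the composition order when passing from $\widetilde T$ back to $T$, and, in the converse, handle the anti-isomorphism case by factoring $g\mapsto g\circ\alpha$ through the genuine isomorphism $\alpha(\cdot^{-1})$ together with the check operation, rather than attempting to argue about $\alpha$ directly. (If a self-contained argument that translation and the check operation are isometric complex algebra automorphisms of $B(G)$ is wanted, it can be extracted from Walter's theorem or from Eymard's description of $B(G)$, but it is standard and I would simply cite it.)
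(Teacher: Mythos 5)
Your argument is correct and follows essentially the same route as the paper: your $\Lambda$ is exactly the paper's translation operator $T_r(f)(x)=f(b^{-1}x)$, the normalisation $\widetilde T=T\circ\Lambda$ with $\widetilde T^{*}(w(e_2))=w(e_1)$ is the same reduction to Proposition \ref{unital}, and the converse rests on the same classical facts (Walter's corollary, plus conjugation and the check operation). The only cosmetic difference is that you compute $\widetilde T^{*}$ via $\Lambda^{*}\circ T^{*}$ where the paper evaluates it directly from the definition of the dual map.
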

\begin{proof}
If $T: B(G) \to B(H)$ takes the form $T(f)(x)= f(b \alpha(x))$, where $\alpha: H \to G$ is either a topological group isomorphism or a topological anti-group isomorphism, then \cite[Corollary of Theorem 3]{Wal} shows that $T$ is an isometric complex algebra isomorphism from $B(G)$ onto $B(H)$.

 Similarly, if $T(f)(x)= \overline{f(b \alpha(x))}$, then $T$ is an isometric conjugate-linear  algebra isomorphism from $B(G)$ onto $B(H)$.

We prove the converse statement. 
Suppose that $T: B(G) \to B(H)$ is an isometric real algebra isomorphism. According to \eqref{jyusunomoto}, $b \in G$ such that $T^{*}(w(e_2))=w(b)$.

  We define $T_r:B(G) \to B(G)$ by $T_r(f)(x)=f(b^{-1}x)$. Since the map $x \mapsto b^{-1}x$ is an affine map on $G$, $T_r$ is an isometric isomorphism on $B(G)$ as in the same argument as the first part of the proof (cf. \cite{Wal}). It follows that  $T \circ T_r\colon B(G) \to B(H)$ is an isometric real algebra isomorphism. Then we obtain the following equation:  
\begin{equation*}
\begin{split}
(T \circ T_r)^{*}(w(e_2))f&=\re w(e_2) (T \circ T_r(f))-i \re w(e_2) (T \circ T_r(if))\\
&=T^{*}(w(e_2))(T_r(f))=w(e_1)f
\end{split}
\end{equation*}
for any $f \in B(G)$. Thus 
\[
(T \circ T_r)^{*}(w(e_2))=w(e_1).
\]
By applying Proposition \ref{unital} to $T \circ T_r\colon B(G) \to B(H)$, we obtain  a topological group isomorphism or a topological anti-group isomorphism $\alpha\colon H \to G$ such that 
\[
T \circ T_r(f)(x)=f(\alpha(x)), \quad x \in H.
\]
or 
\[
T \circ T_r(f)(x)=\overline{f(\alpha(x))}, \quad x \in H.
\]
Thus, for any $f \in B(G)$ and $ x \in H$, we obtain the desired forms 
\[
Tf(x)=T\circ T_r \circ T_r^{-1}(f)(x)=T_r^{-1}(f)(\alpha(x))=f(b \alpha(x)), 
\]
or 
\[
Tf(x)=T\circ T_r \circ T_r^{-1}(f)(x)=\overline{T_r^{-1}(f)(\alpha(x))}=\overline{f(b\alpha(x))}.
\]

\end{proof}

We also characterize isometric real algebra isomorphisms on Fourier algebras $A(G)$. For a locally compact group $G$, $A(G)$ is the space of all matrix coefficients of the left regular representation $\lambda_G$ from $G$ into the unitary operators on $L^{2}(G)$. 

\begin{theorem}\label{realisometryonAG}
Let $G$ and $H$ be locally compact groups.  The map $T \colon A(G) \to A(H)$ is an isometric real algebra isomorphism if and only if there is a topological isomorphism or a topological anti-isomorphism $\alpha \colon H \to G$ and $b \in G$ such that 
\[
T(f)(x)= f(b \alpha(x)) \quad  f \in A(G),  \ x \in H,
\]
or 
\[
T(f)(x)= \overline{f(b \alpha(x))} \quad  f \in A(G),  \ x \in H.
\]
\end{theorem}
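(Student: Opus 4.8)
The plan is to deduce Theorem~\ref{realisometryonAG} from Theorem~\ref{realisometryonBG} by exploiting the well-known structural relationship between the Fourier algebra $A(G)$ and the Fourier--Stieltjes algebra $B(G)$, rather than reworking the entire argument of Section~2 in the $A(G)$ setting. First I would dispose of the easy direction: if $T(f)(x)=f(b\alpha(x))$ with $\alpha\colon H\to G$ a topological (anti-)isomorphism, then composition of the translation-by-$b$ map and the (anti-)isomorphism $\alpha$ is an affine homeomorphism of $H$ onto $G$, and such maps are known to induce isometric complex algebra isomorphisms $A(G)\to A(H)$ (this is standard; see the analogue of \cite[Corollary of Theorem 3]{Wal} for Fourier algebras, or Walter's treatment). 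The conjugate-linear case follows by composing with complex conjugation $f\mapsto\bar f$, which is an isometric conjugate-linear algebra automorphism of $A(G)$ (since $A(G)$ is closed under conjugation and $\|\bar f\|_{A(G)}=\|f\|_{A(G)}$).

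For the converse, suppose $T\colon A(G)\to A(H)$ is an isometric real algebra isomorphism. The key structural facts I would invoke are: (i) $A(G)$ is a closed ideal of $B(G)$; (ii) $A(G)$ is precisely the norm-closed ideal generated by the compactly supported elements of $B(G)$, equivalently $B_\lambda(G)$ in the case that matters, and more usefully for us that $A(G)$ is an $M$-ideal (or at least a closed ideal) whose bidual is a direct summand of $W^*(G)$; and (iii) the spectrum $\sigma(A(G))$ is canonically $G$ itself, with point evaluations $\delta_x$ playing the role that $w(x)$ played for $B(G)$. The argument of Lemma~\ref{multi} through Lemma~\ref{+or-} and Proposition~\ref{unital} goes through essentially verbatim with $w(x)$ replaced by $\delta_x\in\sigma(A(G))\subset A(G)^{**}$ and $W^*(G)$ replaced by $A(G)^{**}$, because the only properties used were: $T^*$ is a surjective real-linear isometry between the biduals; the biduals are von Neumann algebras (or at least $A(G)^{**}$ is, being a $w^*$-closed two-sided ideal of $W^*(G)$, hence of the form $W^*(G)z$ for a central projection $z$); the spectrum is identified with the unitaries of the bidual that lie in the spectrum; the unit of the bidual is a point evaluation; and \cite[Proposition 2.1]{HW} together with Kadison's \cite[Theorem 10]{Kadison} apply to surjective real-linear isometries between von Neumann algebras. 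All of these hold for $A(G)$: in particular $\sigma(A(G))=\{\delta_x:x\in G\}$ is classical, $\delta_{e_1}$ is the identity of $A(G)^{**}$, and the Jordan-isomorphism/central-projection decomposition is available.

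Concretely, I would organize the converse as follows. Define $T^*\colon A(H)^{**}\to A(G)^{**}$ by the same formula \eqref{dualmap} and check it is a surjective real-linear isometry exactly as in the excerpt. Observe $A(G)^{**}$ is a von Neumann algebra (a central summand of $W^*(G)$) with identity $\delta_{e_1}$, and $\sigma(A(G))=\{\delta_x:x\in G\}$ is exactly the set of spectrum elements that are unitary in $A(G)^{**}$ (the analogue of Lemma~\ref{basic}(2), which holds for $A(G)$). Then $T^*(\delta_{e_2})$ is unitary in $A(G)^{**}$ and lies in $\sigma(A(G))$, hence equals $\delta_b$ for some $b\in G$. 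Applying \cite[Proposition 2.1]{HW} yields a Jordan $*$-isomorphism $J\colon A(H)^{**}\to A(G)^{**}$ and a central projection $p$ with $T^*(\varphi)=\delta_b(pJ(\varphi)+(1-p)J(\varphi^*))$. The compositional trick with the translation $T_r(f)(x)=f(b^{-1}x)$ (which is an isometric automorphism of $A(G)$ because $x\mapsto b^{-1}x$ is an affine homeomorphism of $G$) reduces to the case $T^*(\delta_{e_2})=\delta_{e_1}$, and then the $A(G)$-analogue of Proposition~\ref{unital}---proved by running Lemmas~\ref{projection}--\ref{+or-} with $\delta_x$ in place of $w(x)$, and using that a compactly supported positive-definite function lies in $A(G)$ so the continuity argument for $\alpha$ still works---gives a topological (anti-)isomorphism $\alpha\colon H\to G$ with $T\circ T_r(f)(x)=f(\alpha(x))$ or $\overline{f(\alpha(x))}$. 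Unwinding $T_r$ gives the stated forms.

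The main obstacle is verifying that $A(G)^{**}$ is genuinely a von Neumann algebra that behaves like $W^*(G)$ for the purposes of \cite[Proposition 2.1]{HW} and Kadison's theorem, and that $\sigma(A(G))$ with its identification $\{\delta_x\}$ supports the exact analogues of Lemma~\ref{basic} and the universal-representation identity $w(x)(f)=f(x)$. Since $A(G)$ is a closed ideal in $B(G)$, its bidual $A(G)^{**}$ sits inside $B(G)^{**}=W^*(G)$ as a $w^*$-closed two-sided ideal, hence is $W^*(G)z$ for a central projection $z$; the point evaluations $\delta_x$ are the restrictions $w(x)|_{A(G)}$ when $w(x)z\neq 0$, and one must check these are exactly the spectrum of $A(G)$ and coincide with the unitaries of $A(G)^{**}$ in the spectrum. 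This is essentially contained in the Eymard--Walter theory (the spectrum of $A(G)$ is $G$), but it requires care to state in the von Neumann-algebraic language needed here; once it is in place, every lemma from Section~2 transfers mutatis mutandis, and I would write the proof by indicating the substitutions rather than repeating the computations.
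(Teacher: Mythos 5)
Your overall route is the paper's own: rerun the Section~2 machinery for $A(G)$ with the group von Neumann algebra playing the role of $W^{*}(G)$, using that $\sigma(A(G))$ consists exactly of the point evaluations $\delta_x=\lambda_G(x)$ (so the situation is in fact simpler than for $B(G)$, where Lemma~\ref{basic}(2) is needed to isolate $w(G)$ inside a larger spectrum). However, as written your proposal contains a systematic and substantive error: you place everything in the \emph{bidual} $A(G)^{**}$, when the correct home is the \emph{dual} $A(G)^{*}\cong VN(G)$. The adjoint of $T\colon A(G)\to A(H)$ defined by the analogue of \eqref{dualmap} maps $A(H)^{*}\to A(G)^{*}$, i.e.\ $VN(H)\to VN(G)$, not the biduals; the multiplicative functionals $\sigma(A(G))$ are elements of $A(G)^{*}$; and it is $VN(G)=A(G)^{*}$ that is a von Neumann algebra with identity $\delta_{e_1}$ in which the $\delta_x$ are unitaries. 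Your ``main obstacle''---verifying that $A(G)^{**}$ is a von Neumann algebra---is not surmountable because the claim is false in general: $A(G)^{**}=VN(G)^{*}$ is the dual of a von Neumann algebra (the predual of $VN(G)^{**}$), not a von Neumann algebra, and it is not a $w^{*}$-closed ideal of $W^{*}(G)$. Relatedly, $B(G)^{**}\neq W^{*}(G)$; the paper's identification is $B(G)^{*}=W^{*}(G)$ (since $B(G)=C^{*}(G)^{*}$). The correct version of the structural fact you want is that $A(G)^{*}=VN(G)\cong W^{*}(G)z$ for a central projection $z$ supporting the left regular representation, though even this is not needed: one only needs that $VN(G)$ is a von Neumann algebra so that \cite[Proposition 2.1]{HW} and \cite[Theorem 10]{Kadison} apply to the surjective real-linear isometry $T^{*}\colon VN(H)\to VN(G)$.

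Once every ``$A(G)^{**}$'' is replaced by ``$A(G)^{*}=VN(G)$'' (and the appeal to $\sigma(A(G))\cong G$, \cite[Theorem 2.3.8]{KL}, is cited in place of the analogue of Lemma~\ref{basic}(2)), your argument coincides with the paper's proof, which is exactly this: induce $T^{*}\colon VN(H)\to VN(G)$ by the formula \eqref{dualmap} and repeat the proof of Theorem~\ref{realisometryonBG} verbatim with $W^{*}(\cdot)$ replaced by $VN(\cdot)$ and $w(x)$ by $\lambda(x)$. Your treatment of the easy direction and of the translation/reduction trick with $T_r$ is fine.
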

\begin{proof}
 We note that the spectrum $\sigma(A(G))$ of $A(G)$ is homeomorphic to $G$ \cite[Theorem 2.3.8]{KL}. The dual space of $A(G)$ is isometrically  
 isomorphic as Banach space to $VN(G)$, where $VN(G)$ is the von Neumann algebra generated by $\lambda_G$. Any isometric real algebra isomorphism $T: A(G) \to A(H)$ induces a surjective real linear isometry $T^{*}\colon VN(H) \to VH(G)$ by a similar formula as \eqref{dualmap}. 
 We prove the theorem by an argument essentially identical to that given in Theorem \ref{realisometryonBG}, replacing $W^{*}(G)$ and $W^{*}(H)$ by $VH(G)$ and $VN(H)$ respectively.
\end{proof}

\section{isometries between groups of invertible elements}


The study in \cite{H1} initially focused on isometries between groups of invertible elements in Banach algebras. Later, this work was generalized and referenced in \cite{H2}.
In the case where $G$ and $H$ are locally compact {\it abelian} groups, $B(G)$ and $B(H)$ represent the Fourier-Stieltjes transform of the measure algebra $M(\widehat{G})$ and $M(\widehat{H})$ for their respective dual groups $\widehat{G}$ and $\widehat{H}$, respectively. It is worth noting that $B(G)$ and $B(H)$ are isometrically isomorphic to $M(\widehat{G})$ and $M(\widehat{H})$, respectively.
Suppose that $T_0$ is a surjective isometry between open subgroups $\mathfrak{A}$ and $\mathfrak{B}$ of the groups of invertible elements in $B(G)$ and $B(H)$, respectively. Applying 
\cite[Theorem 3.1]{H3}, $\widehat{G}$ and $\widehat{H}$ are topologically isomorphic. This implies that $G$ and $H$ are topologically equivalent through the Pontryagin duality theorem.
Theorem \ref{main} generalizes this result to Fourier-Stieltjes algebras on locally compact groups.

We denote the group of all invertible elements in the Fourier-Stieltjes algebra $B(G)$ by $B(G)^{-1}$.

\begin{theorem}\label{main}
Let $G$ and $H$ be locally compact groups.   Let $\mathfrak{A}$ and $\mathfrak{B}$ be open subgroups of $B(G)^{-1}$ and $B(H)^{-1}$, respectively.  Suppose that $T_0$ is a surjective isometry from $\mathfrak{A}$ onto $\mathfrak{B}$. Then $T_0$ is extended to an isometric real algebra isomorphism from $B(G)$ onto $B(H)$ and there is a topological isomorphism or a topological anti-isomorphism $\alpha:H \to G$ and $b \in G$ such that 
\[
T_0(f)(x)= T_0(1)f(b \alpha(x)) \quad  f \in \mathfrak{A},  \ x \in H,
\]
or 
\[
T_0(f)(x)= T_0(1)\overline{f(b \alpha(x))} \quad  f \in \mathfrak{A},  \ x \in H.
\]
\end{theorem}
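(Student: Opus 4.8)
The plan is to reduce, by the general theory of isometries between groups of invertible elements from \cite{H1} and \cite{H2}, to the situation already settled in Theorem \ref{realisometryonBG}, and then to read off the two concrete formulas.

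First I would note that $B(G)$ and $B(H)$ are commutative, semisimple, unital Banach algebras (the unit being the constant function $1$, the coefficient of the trivial representation), and that, an open subgroup of a topological group being clopen, $\mathfrak{A}$ and $\mathfrak{B}$ contain the principal components of $B(G)^{-1}$ and $B(H)^{-1}$. Thus the hypotheses of the structure theorem of \cite{H1}/\cite{H2} for surjective isometries between open subgroups of invertible groups are in force. Since $T_0(1)=T_0(1_{B(G)})\in\mathfrak{B}$, the normalized map $T:=T_0(1)^{-1}T_0$ is again a surjective isometry of $\mathfrak{A}$ onto $\mathfrak{B}$ with $T(1)=1$, and the cited theorem yields that $T$ extends to a surjective real-linear isometry $B(G)\to B(H)$ that also preserves products, i.e.\ to an isometric real algebra isomorphism. (This is the sense in which ``$T_0$ is extended to an isometric real algebra isomorphism'' is to be read, after factoring out $T_0(1)$.)

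Next I would apply Theorem \ref{realisometryonBG} to this isometric real algebra isomorphism $T$, obtaining $b\in G$ and a topological isomorphism or topological anti-isomorphism $\alpha\colon H\to G$ with $T(f)(x)=f(b\alpha(x))$ for all $f\in B(G)$, $x\in H$, or else $T(f)(x)=\overline{f(b\alpha(x))}$ for all such $f,x$. Undoing the normalization, for $f\in\mathfrak{A}$ and $x\in H$ we get $T_0(f)(x)=\bigl(T_0(1)T(f)\bigr)(x)=T_0(1)(x)\,T(f)(x)$, which is $T_0(1)(x)f(b\alpha(x))$ in the first case and $T_0(1)(x)\overline{f(b\alpha(x))}$ in the second --- precisely the asserted forms.

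The hard part is the middle step: extracting from a bare surjective isometry between the open subgroups $\mathfrak{A}$ and $\mathfrak{B}$ that $T_0(1)^{-1}T_0$ is the restriction of an isometric real algebra isomorphism. This is a Mazur--Ulam-type statement about the multiplicative structure, and the point needing care is to confirm that the theorem of \cite{H1}/\cite{H2} applies to $B(G)$ with no structural assumptions beyond commutativity, semisimplicity and unitality --- all of which $B(G)$ has. Granting that input, the remaining steps are routine, and Theorem \ref{realisometryonBG} does the rest.
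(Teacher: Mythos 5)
Your proposal is correct and follows essentially the same route as the paper: normalize by $T_0(1)$, invoke \cite[Theorem 3.3]{H1} or \cite[Corollary 5.2]{H2} to extend $(T_0(1))^{-1}T_0$ to an isometric real algebra isomorphism, apply Theorem \ref{realisometryonBG}, and undo the normalization. The added observations (clopenness of the subgroups, the pointwise factorization $T_0(f)(x)=T_0(1)(x)\,T(f)(x)$) are consistent with, and slightly more explicit than, the paper's two-line argument.
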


\begin{proof}
First, we note that $B(G)$ and $B(H)$  are unital semisimple commutative Banach algebras. 
    Applying \cite[Theorem 3.3]{H1} or \cite[Corollary 5.2]{H2} for $A=B(G)$ and $B=B(H)$ we have 
    that $(T_0(1))^{-1}T_0$ extends to an isometric real algebra isomorphism $T$ from $A$ onto $B$. By Theorem \ref{realisometryonBG} we obtain a topological isomorphism or a topological anti-isomorphism $\alpha\colon H\to G$ and $b\in G$  such that 
\[
T(f)(x)= f(b \alpha(x)) \quad  f \in B(G),  \ x \in H,
\]
or 
\[
T(f)(x)= \overline{f(b \alpha(x))} \quad  f \in B(G),  \ x \in H.
\]
As $T$ is an extension of $(T_0(1))^{-1}T_0$ we have the desired equalities.
\end{proof}
We apply \cite[Theorem 3.3]{H1} or \cite[Corollary 5.2]{H2} in the proof above. 

\begin{corollary}\label{topologicalygroupiso}
    For locally compact groups $G$ and $H$, suppose that open subgroups $\mathfrak{A}$ and $\mathfrak{B}$ of $B(G)^{-1}$ and $B(H)^{-1}$ are isometric. Then $G$  and $H$ are topologically isomorphic to each other.
\end{corollary}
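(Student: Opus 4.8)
The plan is to read the conclusion straight out of Theorem \ref{main}. By hypothesis the open subgroups $\mathfrak{A}\subset B(G)^{-1}$ and $\mathfrak{B}\subset B(H)^{-1}$ are isometric, i.e.\ there is a surjective isometry $T_0$ from $\mathfrak{A}$ onto $\mathfrak{B}$. I would feed exactly this $T_0$ into Theorem \ref{main}, which asserts that $T_0$ extends to an isometric real algebra isomorphism of $B(G)$ onto $B(H)$ and, more to the point, yields a topological isomorphism or a topological anti-isomorphism $\alpha\colon H\to G$ (the accompanying element $b\in G$ and the explicit formula for $T_0$ play no role here).

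The only remaining issue is to remove the word ``anti''. If $\alpha$ is already a topological isomorphism, then $G$ and $H$ are topologically isomorphic and there is nothing more to do. If $\alpha$ is a topological anti-isomorphism, I would invoke Remark \ref{remark1}: the map $\beta\colon H\to G$ defined by $\beta(x)=\alpha(x^{-1})$ is then a group isomorphism, and since inversion on $H$ is a homeomorphism and $\alpha$ is a homeomorphism, $\beta$ is a homeomorphism as well, hence a topological isomorphism. In either case $G$ and $H$ are topologically isomorphic.

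I do not anticipate any genuine obstacle here: essentially all of the work has already been carried out in Theorem \ref{realisometryonBG} and Theorem \ref{main}, and this corollary only packages their output together with the elementary observation, recorded in Remark \ref{remark1}, that the dichotomy ``topologically isomorphic or topologically anti-isomorphic'' collapses to ``topologically isomorphic'' via composition with group inversion. The one point worth stating carefully in the write-up is that ``isometric'' is to be understood as the existence of a \emph{surjective} isometry between $\mathfrak{A}$ and $\mathfrak{B}$, so that Theorem \ref{main} applies verbatim.
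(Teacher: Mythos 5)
Your proposal is correct and follows essentially the same route as the paper: apply Theorem \ref{main} to the surjective isometry $T_0$ to obtain a topological isomorphism or anti-isomorphism $\alpha\colon H\to G$, and in the latter case compose with group inversion (as in Remark \ref{remark1}) to obtain a genuine topological isomorphism. The only cosmetic difference is that you precompose with inversion on $H$ while the paper postcomposes with inversion on $G$; these are trivially equivalent.
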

\begin{proof}
Theorem \ref{main} yields either a topological isomorphism of $H$ onto $G$, or a topological anti-isomorphism of $H$ onto $G$. In the latter case, following the anti-isomorphism with the inverse map $x\in G \mapsto x^{-1}\in G$ yields a topological isomorphism of $H$ onto $G$ as Remark \ref{remark1}.
\end{proof}

\subsection*{Acknowledgments}
The first author was supported by 
JSPS KAKENHI Grant Numbers JP19K03536. 
The second author was supported by JSPS KAKENHI Grant Numbers JP21K13804. 

\end{document}